\documentclass[microtype]{jloganal}
\usepackage[utf8]{inputenc}
\usepackage{amsmath}
\usepackage{amsthm}
\usepackage{amssymb}
\usepackage{mathabx}
\usepackage[only,llbracket,rrbracket]{stmaryrd}

\newtheorem{theorem}{Theorem}
\newtheorem{lemma}[theorem]{Lemma}

\newtheorem{example}[theorem]{Example}
\newtheorem{remark}[theorem]{Remark}
\newtheorem{proposition}[theorem]{Proposition}
\newtheorem{corollary}[theorem]{Corollary}
\newtheorem{definition}[theorem]{Definition}

\newtheorem{convention}[theorem]{Convention}

\def\RM{\mathrm{rm}}

\title[Hyperreal differentiation with an idempotent ultrafilter]{Hyperreal differentiation with an idempotent ultrafilter}

\author[S\,A Alexander]{Samuel Allen Alexander}
\givenname{Samuel Allen}
\surname{Alexander}
\address{The U.S.\ Securities and Exchange Commission}
\email{samuelallenalexander@gmail.com}

\author[B Dawson]{Bryan Dawson}
\givenname{Bryan}
\surname{Dawson}
\address{Union University}
\email{bdawson@uu.edu}

\keywords{hyperreals, idempotent ultrafilters, derivatives, finite calculus,
Hindman's theorem}
\subject{primary}{msc2020}{26E35, 26A24}
\subject{secondary}{msc2020}{54D80, 30D20}

\volumenumber{}
\issuenumber{}
\publicationyear{}
\papernumber{}
\startpage{}
\endpage{}
\doi{}
\MR{}
\Zbl{}
\received{}
\revised{}
\accepted{}
\published{}
\publishedonline{}
\proposed{}
\seconded{}
\corresponding{}
\editor{}
\version{}


\begin{abstract}
    In the hyperreals constructed using a free ultrafilter on $\mathbb R$,
    where $[f]$ is the hyperreal represented by $f:\mathbb R\to\mathbb R$,
    it is tempting to define a derivative operator by $[f]'=[f']$, but
    unfortunately this is not generally well-defined. We show that if the
    ultrafilter in question is idempotent and contains $(0,\epsilon)$ for
    arbitrarily small real $\epsilon$ then the desired derivative
    operator is well-defined for all $f$ such that $[f']$ exists.
    We also introduce a hyperreal variation of the derivative from finite
    calculus, and show that it has surprising relationships to the
    standard derivative. We give an alternate proof,
    and strengthened version of, Hindman's theorem.
\end{abstract}

\begin{asciiabstract}
    In the hyperreals constructed using a free ultrafilter on R,
    where [f] is the hyperreal represented by f:R->R,
    it is tempting to define a derivative operator by [f]'=[f'], but
    unfortunately this is not generally well-defined. We show that if the
    ultrafilter in question is idempotent and contains (0,epsilon) for
    arbitrarily small real epsilon then the desired derivative
    operator is well-defined for all f such that [f'] exists.
    We also introduce a hyperreal variation of the derivative from finite
    calculus, and show that it has surprising relationships to the
    standard derivative. We give a new proof of Hindman's theorem, and
    we prove a stronger theorem.
\end{asciiabstract}

\begin{document}

\maketitle

\section{Introduction}

There is a long tradition
\cite{shelly1911cuestion,barbeau1961remarks,ufnarovski2003differentiate,
buium2005arithmetic,stay2005generalized,kovic2012arithmetic,buium2015differential,pasten2022arithmetic,buium2023foundations} of attempting to differentiate numbers in various ways.
Much attention was focused on derivatives of numbers when Jeffries'
paper on the subject appeared in the Notices of the
AMS late last year \cite{jeffries1772differentiating}; almost simultaneously
(and apparently independently),
Tossavainen et al's survey on the subject appeared in the College Math Journal
\cite{tossavainen2024we}.

Why should the reader care about differentiating numbers?
In general, any time a new theory is introduced, it is natural to seek
\emph{numerical} structures satisfying that theory: thus, when the theory of groups
is introduced, it is natural to introduce examples like $(\mathbb Q,+)$
and $(\mathbb R^+,\cdot)$.
Theories about elementary calculus functions, in languages
including the unary function symbol $\bullet'$, were originally modeled
by structures whose universes consisted of elementary calculus functions,
\emph{not} numbers. We can at least try to find numerical models for these
theories. The act of interpreting
$\bullet'$ in a structure whose universe is a number system
is the act of ``differentiating numbers''. We are hopeful that
generalizing models of elementary calculus function theories could eventually
be fruitful just like generalizing models of permutation sets led to
the abstract theory of groups.

When it comes to numerically modeling theories,
different theories might require different
number systems: both $(\mathbb Z,+,\cdot)$ and $(\mathbb Q,+,\cdot)$
are rings, but only the latter is a field.
By gaining knowledge about which number systems are needed for which theories,
we gain insight into those theories. We hope that a greater
knowledge about which number systems are needed to model various subtheories of
elementary calculus functions, will eventually give us insight into
those subtheories.

If the only axiom we care about is the
Leibniz rule (and the nontriviality axiom $\exists x\,\mbox{s.t.}\,x'\not=0$),
we can interpret $\bullet'$
on $\mathbb N$ so as to satisfy that. That is the approach of
\cite{barbeau1961remarks,ufnarovski2003differentiate}.
But their $(\mathbb N,\bullet')$ does not even satisfy the
linearity axiom. Our interpretation (in Section \ref{mainsection})
of $\bullet'$ on a subset of the hyperreals
will satisfy far more axioms of the theory of elementary calculus functions.
And in Section \ref{entirenumberssectn},
we will introduce a stricter subset of the hyperreals where
not only $\bullet'$ can be elegantly interpreted, but $\circ$ as well
(in other words, there is an elegant way to define the ``composition'' of
two numbers there),
in such a way as to numerically satisfy even the chain rule.

The key idea behind our numerical interpretation of $\bullet'$ is to
commute the derivative
operation with the operation of taking a function's equivalence class in the
hyperreals, in other words, define $[f]'=[f']$ (we will spell out the details below). Unfortunately, this is not well-defined in general. However, the idea
can be salvaged in several different ways, by making use of certain idempotent ultrafilters. In Section \ref{finitecalculussection} we will use the same
approach to well-define a hyperreal variation of the derivative from finite
calculus, and as an application of that, we will give a new proof of Hindman's
theorem and also strengthen said theorem.

\section{Preliminaries}

Throughout the paper, we write $\beta\mathbb R$ for the set of ultrafilters on
$\mathbb R$.

\begin{definition}
\label{hyperrealsdefn}
(Hyperreals)
For each free $p\in\beta\mathbb R$, let ${}^*\mathbb R_p$
be the hyperreals constructed using $p$. For every $f:\mathbb R\to\mathbb R$,
let $[f]_p$ be the hyperreal represented by $f$. If $p$ is clear from
context, we will write ${}^*\mathbb R$ and $[f]$ for ${}^*\mathbb R_p$
and $[f]_p$, respectively.
\end{definition}

\begin{convention}
\label{undefdconvention}
If $p\in\beta\mathbb R$ is free and $f$ is a function with codomain
$\mathbb R$ and with domain $\mathrm{dom}(f)\in p$, we will write
$[f]_p$ for $[\hat f]_p$ where $\hat f:\mathbb R\to\mathbb R$
is the extension of $f$ defined by
$\hat f(x)=0$ for all $x\in\mathbb R\backslash\mathrm{dom}(f)$.
If $\mathrm{dom}(f)\not\in p$, we say that $[f]_p$ \emph{does not exist}.
If $p$ is clear from context, we will write $[f]$ for $[f]_p$.
\end{convention}

\begin{definition}
    For each $f:\mathbb R\to\mathbb R$, let ${}^*f:{}^*\mathbb R\to{}^*\mathbb R$
    be the nonstandard extension of $f$. Let $\Omega=[x\mapsto x]$ be
    the hyperreal represented by the identity function.
\end{definition}

For every $f:\mathbb R\to\mathbb R$, there are two ways of viewing $f$
in nonstandard analysis. It can be viewed as the number $[f]$ or as the
function ${}^*f:{}^*\mathbb R\to{}^*\mathbb R$. The two are related via
$\Omega$. Namely: $[f]={}^*f(\Omega)$.

Unfortunately, the following proposition shows that the idea of defining
$[f]'=[f']$ does not work in general.

\begin{proposition}
\label{illdefdprop}
    (Ill-definedness)
    \begin{enumerate}
        \item
        There exists a free $p\in\beta\mathbb R$
        and everywhere-differentiable $f,g:\mathbb R\to\mathbb R$
        such that $[f]_p=[g]_p$ but $[f']_p\not=[g']_p$.
        \item
        For every free $p\in\beta\mathbb R$,
        for all $f:\mathbb R\to\mathbb R$ such that $[f']_p$ exists,
        there exists $g:\mathbb R\to\mathbb R$ such that $[f]_p=[g]_p$
        but $[g']_p$ does not exist.
    \end{enumerate}
\end{proposition}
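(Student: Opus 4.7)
The plan is to handle the two parts separately with explicit constructions, neither of which should be particularly hard.

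For part (1), I would take $f \equiv 0$ and $g = \sin$, and then choose a free ultrafilter tailored to these two functions. Both are everywhere differentiable; they agree on the infinite set $Z = \pi\mathbb{Z}$, while $g'(n\pi) = (-1)^n \neq 0 = f'(n\pi)$. It then suffices to exhibit a free $p \in \beta\mathbb{R}$ containing $Z$. Since $Z$ is infinite, the family of all $A \subseteq \mathbb{R}$ with $Z\setminus A$ finite forms a proper filter, and any ultrafilter refining it is automatically free. Then $f = g$ on $Z \in p$ gives $[f]_p = [g]_p$, while $[f']_p = 0$ and $[g']_p \in \{1,-1\}$ (determined by which of $2\pi\mathbb{Z}$, $\pi + 2\pi\mathbb{Z}$ lies in $p$), so $[f']_p \neq [g']_p$.

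For part (2), I would fix a free $p$ and an $f$ with $\mathrm{dom}(f') \in p$, and then perturb $f$ on a dense set. Since $\{\mathbb{Q}, \mathbb{R}\setminus\mathbb{Q}\}$ partitions $\mathbb{R}$, exactly one of these lies in $p$; call it $A$ and the other $B$. Let $g$ equal $f$ on $A$ and $f+1$ on $B$, so that $g = f$ on $A \in p$ and hence $[g]_p = [f]_p$. The crucial observation is that both $A$ and $B$ are dense in $\mathbb{R}$: at any continuity point $x_0$ of $f$, sequences in $A$ and in $B$ converging to $x_0$ send $g$ to two different limits $f(x_0)$ and $f(x_0)+1$, so $g$ is discontinuous, hence not differentiable, at $x_0$. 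Since differentiability implies continuity, this forces $\mathrm{dom}(g') \cap \mathrm{dom}(f') = \emptyset$, so $\mathrm{dom}(g')$ is a subset of the complement of a member of $p$ and is therefore not in $p$; by Convention~\ref{undefdconvention}, $[g']_p$ does not exist.

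The only subtlety worth flagging, in part (2), is that $g$ must be non-differentiable on a $p$-large set, not merely somewhere; the density of both halves of the chosen partition is precisely what supplies this, and any other partition of $\mathbb{R}$ into two dense sets would serve equally well.
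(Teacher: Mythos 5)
Your proposal is correct and follows essentially the same route as the paper: for (1) the paper takes $f=0$, $g(x)=\sin\pi x$ and a free $p$ with $\mathbb N\in p$ (your $g=\sin$ with $\pi\mathbb Z\in p$ is the same construction up to rescaling), and for (2) the paper perturbs $f$ by the indicator of the complement of a dense, co-dense set in $p$, exactly as you do with $\mathbb Q$ versus $\mathbb R\setminus\mathbb Q$. Your write-up just makes explicit the step the paper leaves implicit, namely that $g$ is discontinuous at every continuity point of $f$, so $\mathrm{dom}(g')$ misses the $p$-large set $\mathrm{dom}(f')$ entirely.
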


\begin{proof}
    (1) Let $p\in\beta\mathbb R$ such that
    $\mathbb N\in p$. The claim is witnessed by $f(x)=0$
    and $g(x)=\sin \pi x$.

    (2) Let $D\subseteq \mathbb R$ be dense and co-dense.
    Assume $D\in p$ (if not, then $D^c\in p$ and a similar argument applies).
    The claim is witnessed by $g(x)=f(x)+\chi_{D^c}(x)$.
\end{proof}

In light of Proposition \ref{illdefdprop}, we cannot expect the definition
$[f]'_p=[f']_p$ to work for every free $p\in\beta\mathbb R$ even
if we restrict our attention to everywhere-differentiable $f$; and if we
do not so restrict our attention, then we can \emph{expect} the definition
$[f]'_p=[f']_p$ to fail for every $p$.
We will show that if we restrict attention to those $f$ such that $[f']$
exists, then the definition $[f]'_p=[f']_p$ \emph{does} work provided
$p$ is idempotent and contains $(0,\epsilon)$ for every $\epsilon>0$.

\section{Differentiating hyperreals $[f]$ such that $[f']$ exists}
\label{mainsection}

\begin{definition}
\label{idempotentultrafiltersonRdefn}
(Idempotent ultrafilters on $\mathbb R$)
    \begin{enumerate}
        \item
        For each $S\subseteq \mathbb R$ and any $y\in\mathbb R$,
        $S-y$ is defined to be $\{x-y\,:\,x\in S\}$.
        \item
        An ultrafilter $p\in\beta\mathbb R$ is \emph{idempotent} if
        $p=\{S\subseteq \mathbb R \,:\, \{y\in\mathbb R\,:\,S-y\in p\}\in p\}$.
    \end{enumerate}
\end{definition}

\begin{definition}
    By $0^+$ we mean the set of ultrafilters $p\in\beta\mathbb R$
    such that $p$ satisfies the following requirement.
    For every real $\epsilon>0$, the open interval $(0,\epsilon)\in p$.
\end{definition}

\begin{lemma}
\label{zeroplushasidempotentultrafilterlemma}
    $0^+$ contains an idempotent ultrafilter.
\end{lemma}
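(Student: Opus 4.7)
The plan is to apply the Ellis--Numakura lemma: every nonempty compact Hausdorff left-topological semigroup contains an idempotent. First we equip $\beta\mathbb R$ with the standard extension of addition, defined by $A\in p+q$ iff $\{y\in\mathbb R : A-y\in p\}\in q$. With this convention, the idempotence condition of Definition \ref{idempotentultrafiltersonRdefn} is exactly $p+p=p$, so an idempotent of $(0^+,+)$ in the semigroup sense is exactly what we need. That $(\beta\mathbb R,+)$ is an associative compact left-topological semigroup is standard from the theory of Stone--\v Cech compactifications of discrete semigroups (cf.\ Hindman--Strauss); the key point is that the left-translation $\lambda_p(q)=p+q$ is continuous because it pulls a basic clopen $\{r:A\in r\}$ back to the basic clopen $\{q:B\in q\}$ where $B=\{y:A-y\in p\}$.

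Given this setup, it suffices to verify that $0^+$ is itself a nonempty compact subsemigroup of $\beta\mathbb R$. Nonemptiness is immediate: the family $\{(0,\epsilon):\epsilon>0\}$ has the finite intersection property and so extends to an ultrafilter in $0^+$. Compactness follows by writing $0^+ = \bigcap_{\epsilon>0}\{p\in\beta\mathbb R : (0,\epsilon)\in p\}$ as an intersection of basic clopens in the compact space $\beta\mathbb R$. Left-topologicality of $0^+$ is inherited from $\beta\mathbb R$.

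The main obstacle, and the only step requiring a genuine translation argument, is closure of $0^+$ under $+$. The strategy is as follows: given $p,q\in 0^+$ and $\epsilon>0$, to show $(0,\epsilon)\in p+q$ we must show $\{y:(0,\epsilon)-y\in p\}\in q$. For $y\in(0,\epsilon/2)$, the translate $(0,\epsilon)-y=(-y,\epsilon-y)$ contains $(0,\epsilon/2)$, which lies in $p$ because $p\in 0^+$; hence $(0,\epsilon/2)\subseteq\{y:(0,\epsilon)-y\in p\}$, and this set lies in $q$ because $q\in 0^+$. This is the step where the specific nature of $0^+$ (concentration of its sets arbitrarily close to $0$ from the right) is genuinely used; once it is in place, Ellis--Numakura delivers the desired idempotent in $0^+$.
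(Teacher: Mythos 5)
Your proof is correct and is essentially the same argument the paper invokes: the paper simply cites Lemma 13.29(a) and Theorem 13.31 of Hindman--Strauss, which establish exactly what you prove, namely that $0^+$ is a nonempty compact subsemigroup of $(\beta\mathbb R,+)$ (for $\mathbb R$ discrete) and hence contains an idempotent by Ellis--Numakura. Your convention for $+$ does match the paper's Definition~\ref{idempotentultrafiltersonRdefn} (so $p+p=p$ is the right notion), and the closure computation $(0,\epsilon)-y\supseteq(0,\epsilon/2)$ for $y\in(0,\epsilon/2)$ is the genuine content; the only thing to keep straight is that with your convention it is the maps $q\mapsto p+q$ that are continuous, which is the one-sided continuity Ellis--Numakura requires.
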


\begin{proof}
    By Lemma 13.29(a) and Theorem 13.31 of \cite{hindman2011algebra}.
\end{proof}

Clearly an ultrafilter in $0^+$ is free.
The following lemma illustrates the power of ultrafilters in $0^+$.

\begin{lemma}
\label{neatlemma}
    Let $p\in 0^+$.
    If $f:\mathbb R\to\mathbb R$ is continuous at $0$ then
    $\mathrm{st}([f]_p)=f(0)$.
\end{lemma}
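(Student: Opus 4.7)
The plan is to show directly that $[f]_p - f(0)$ is infinitesimal, which by definition of the standard part yields $\mathrm{st}([f]_p) = f(0)$. Concretely, I will show that for every real $\epsilon > 0$, the hyperreal inequality $|[f]_p - f(0)| < \epsilon$ holds, and this suffices.

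First I would translate the desired hyperreal inequality into a statement about the ultrafilter: in the ultrapower construction, $|[f]_p - f(0)| < \epsilon$ holds if and only if the set $A_\epsilon = \{x \in \mathbb R \,:\, |f(x) - f(0)| < \epsilon\}$ belongs to $p$. So the goal reduces to proving $A_\epsilon \in p$ for each real $\epsilon > 0$.

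Next, I would invoke continuity of $f$ at $0$ to obtain $\delta > 0$ such that $|f(x) - f(0)| < \epsilon$ whenever $|x| < \delta$. In particular, the open interval $(0, \delta) \subseteq A_\epsilon$. Since $p \in 0^+$, we have $(0, \delta) \in p$ by definition, and because $p$ is an ultrafilter (hence closed upward under inclusion), $A_\epsilon \in p$ as desired.

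There is essentially no obstacle here: the proof is a one-line unpacking of $p \in 0^+$ together with the $\epsilon$--$\delta$ definition of continuity. The only thing to be careful about is that Convention~\ref{undefdconvention} does not interfere, but since $f$ is defined on all of $\mathbb R$, $[f]_p$ exists unambiguously and the ultrapower description of $<$ applies directly.
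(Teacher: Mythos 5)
Your proof is correct and follows essentially the same route as the paper's: fix a real $\epsilon>0$, use continuity at $0$ to get a $\delta$ with $(0,\delta)$ contained in the set where $|f(x)-f(0)|<\epsilon$, and conclude from $(0,\delta)\in p$ and upward closure that $[f]_p$ is within $\epsilon$ of $f(0)$. The only cosmetic difference is that you invoke two-sided continuity where the paper uses only the interval $(0,\delta)$; both are immediate from the hypothesis.
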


\begin{proof}
    Let $\epsilon>0$ be real. By continuity of $f$ at $0$,
    $\exists\delta>0$ such that $|f(0)-f(x)|<\epsilon$ whenever
    $x\in(0,\delta)$.
    Since $p\in0^+$, $(0,\delta)\in p$.
    Thus, $f$ is within $\epsilon$ of $f(0)$ ultrafilter often,
    so $[f]_p$ is within $\epsilon$ of $f(0)$.
\end{proof}

For the rest of this section, we fix an idempotent $p\in 0^+$.
The following theorem shows that this suffices to make the
definition $[f]'=[f']$ well-defined if we restrict it to
functions such that $[f']$ exists. Note that since $p\in 0^+$,
the existence of $[f']$ is equivalent to the statement that for all
real $\epsilon>0$, there exists real $\delta\in (0,\epsilon)$ such that
$f'(\delta)$ exists.

\begin{theorem}
\label{firsttheorem}
    For all $f,g:\mathbb R\to\mathbb R$ such that $[f']$ and $[g']$ exist,
    if $[f]=[g]$ then $[f']=[g']$.
\end{theorem}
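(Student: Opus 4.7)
The plan is to reduce the conclusion $[f']=[g']$ to showing that a certain explicit subset of $\mathbb R$ lies in $p$, and then to build that subset out of the idempotence of $p$ together with the hypothesis $p \in 0^+$.

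Write $A = \{x \in \mathbb R : f(x) = g(x)\}$, which is in $p$ since $[f]=[g]$, and let $D_f = \mathrm{dom}(f')$ and $D_g = \mathrm{dom}(g')$, both of which lie in $p$ because $[f']$ and $[g']$ exist. By Convention \ref{undefdconvention}, proving $[f']=[g']$ amounts to showing that $\{y \in D_f \cap D_g : f'(y) = g'(y)\} \in p$.

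The key move is idempotence: since $A \in p$, Definition \ref{idempotentultrafiltersonRdefn} yields $B := \{y \in \mathbb R : A - y \in p\} \in p$. I then fix any $y$ in $A \cap B \cap D_f \cap D_g$ (a set in $p$, as an intersection of four members of $p$) and aim to show $f'(y)=g'(y)$. Because $A - y \in p$ and $(0, 1/n) \in p$ for every $n$, each intersection $(A-y) \cap (0,1/n)$ is in $p$ and hence nonempty, so I can pick $x_n \in A-y$ with $0 < x_n < 1/n$. Then $x_n \to 0^+$ and $y+x_n \in A$, giving $(f-g)(y+x_n)=0$; combined with $(f-g)(y)=0$ (from $y \in A$), the difference quotient of $f-g$ at $y$ along the sequence $x_n$ is identically zero. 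Differentiability of $f$ and $g$ at $y$ then forces
\[ f'(y) - g'(y) = \lim_{n\to\infty} \frac{(f-g)(y+x_n)-(f-g)(y)}{x_n} = 0. \]

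The main obstacle is really only to notice which combinations of hypotheses must be paired: idempotence is what supplies the auxiliary set $B\in p$ from $A\in p$, while the condition $p\in 0^+$ is what converts the otherwise purely combinatorial statement ``$A-y\in p$'' into the concrete sequence $x_n\to 0^+$ with $y+x_n\in A$ that lets differentiability of $f$ and $g$ at $y$ be invoked. Once both ingredients are in play, the remaining computation is just a one-line application of the definition of the derivative, so I do not anticipate any further difficulty.
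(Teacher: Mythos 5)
Your proposal is correct and follows essentially the same route as the paper's own proof: apply idempotence to the agreement set to get a $p$-large set of points $y$ with $A-y\in p$, use $p\in 0^+$ to extract difference-quotient agreement at increments arbitrarily close to $0$, and conclude by uniqueness of the (existing) derivative limits. The only differences are cosmetic — you apply idempotence to $A$ alone rather than to $A\cap D_f\cap D_g$ and phrase the final step via a sequence and the function $f-g$ rather than two equal difference quotients.
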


\begin{proof}
    Since $[f]=[g]$, there is some $S_0\in p$ such that
    $f=g$ on $S_0$. Let $S=S_0\cap \mathrm{dom}(f')\cap \mathrm{dom}(g')$.
    Existence of $[f']$ and $[g']$ means $\mathrm{dom}(f')\in p$
    and $\mathrm{dom}(g')\in p$, thus $S\in p$.
    Since $p$ is idempotent, $\{x\in\mathbb R\,:\,S-x\in p\}\in p$.
    Thus $S\cap \{x\in\mathbb R\,:\,S-x\in p\}\in p$.
    To show $[f']=[g']$, we will show that
    $f'=g'$ on $S\cap \{x\in\mathbb R\,:\,S-x\in p\}$.
    Let $x\in S\cap \{x\in\mathbb R\,:\,S-x\in p\}$.
    In particular, $S-x\in p$. We must show $f'(x)=g'(x)$.

    Claim: For all $h\in S-x$, $(f(x+h)-f(x))/h=(g(x+h)-g(x))/h$.
    Indeed, let $h\in S-x$. This means $h=y-x$ for some $y\in S$.
    Compute:
    \begin{align*}
        (f(x+h)-f(x))/h
            &= (f(x+y-x)-f(x))/h
                &\mbox{($h=y-x$)}\\
            &= (f(y)-f(x))/h
                &\mbox{(Algebra)}\\
            &= (g(y)-g(x))/h
                &\mbox{($f=g$ on $S$)}\\
            &= (g(x+y-x)-g(x))/h
                &\mbox{(Algebra)}\\
            &= (g(x+h)-g(x))/h,
                &\mbox{($h=y-x$)}
    \end{align*}
    proving the claim.

    Since $p\in0^+$ and $S-x\in p$,
    it follows that for all real $\epsilon>0$, $(S-x)\cap (0,\epsilon)\in p$,
    thus is nonempty.
    So $S-x$ contains $h$ arbitrarily near $0$.
    Since $x\in \mathrm{dom}(f')\cap \mathrm{dom}(g')$,
    $\lim_{h\to 0}(f(x+h)-f(x))/h$ and $\lim_{h\to 0}(g(x+h)-g(x))/h$
    exist. Since both limits exist and since there are $h$ arbitrarily near $0$
    such that $(f(x+h)-f(x))/h=(g(x+h)-g(x))/h$,
    the limits must be equal, that is, $f'(x)=g'(x)$.
\end{proof}

\begin{corollary}
\label{primewelldefdcorollary}
    Let
    $
        \mathcal D
        =
        \{
            [f] \,:\, \mbox{$f:\mathbb R\to\mathbb R$ and $[f']$ exists}
        \}.
    $
    The derivative operation $\bullet':\mathcal D\to {}^*\mathbb R$
    defined by $[f]'=[f']$ (for all $f$ such that $[f']$ exists) is well-defined.
\end{corollary}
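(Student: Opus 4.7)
The plan is to unwind what well-definedness of $\bullet':\mathcal D\to{}^*\mathbb R$ asks us to check, and then observe that it is exactly what Theorem \ref{firsttheorem} delivers. A hyperreal $\xi\in\mathcal D$ is, by definition, an equivalence class $[f]$ admitting \emph{some} representative $f:\mathbb R\to\mathbb R$ for which $[f']$ exists; the formula $[f]'=[f']$ then prescribes the output only relative to this chosen representative. Well-definedness means that this output does not in fact depend on the choice: if $f$ and $g$ are two functions with $[f]=[g]=\xi$ and both $[f']$ and $[g']$ exist, then $[f']=[g']$.

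Thus the first step is to note that any two ``valid'' representatives of the same $\xi\in\mathcal D$ are precisely a pair $(f,g)$ satisfying the hypotheses of Theorem \ref{firsttheorem}, and the required equality $[f']=[g']$ is precisely its conclusion. So the corollary is immediate. I do not anticipate an obstacle here: the only conceivable subtlety is whether Convention \ref{undefdconvention} plays a role (since $f'$ is partially defined), but that is already absorbed into the meaning of ``$[f']$ exists'' used in Theorem \ref{firsttheorem} and in the definition of $\mathcal D$, so no additional argument is needed. The corollary can therefore be stated in one line: apply Theorem \ref{firsttheorem}.
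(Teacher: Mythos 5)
Your proposal is correct and matches the paper's treatment: the corollary is stated there without a separate proof, being regarded as an immediate consequence of Theorem \ref{firsttheorem}, which is exactly the unwinding you give. Your remark that Convention \ref{undefdconvention} is already absorbed into the meaning of ``$[f']$ exists'' is accurate and requires no further argument.
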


We do not yet know whether $\mathcal D$ (from
Corollary \ref{primewelldefdcorollary}) is a proper subset of ${}^*\mathbb R$.
In other words: can every hyperreal be written in the form $[f]$ where $[f']$
exists? Does this depend on $p$?

If $\mathcal D$ is as in Corollary \ref{primewelldefdcorollary} then
it follows that the structure $(\mathcal D,1,\Omega,+,\cdot,\bullet')$ satisfies
every positive formula in the theory of elementary calculus functions
in the language $(1,\mathrm{id},+,\cdot,\bullet')$, where,
by \emph{positive formula},
we mean a formula that can be built up without using $\neg$ or $\not=$.
In particular this includes the Leibniz rule axiom, linearity,
and the power rule schema.
The structure also satisfies the nontriviality axiom
$\exists x\,\mbox{s.t.}\,x'\not=0$.
All this remains true if constant symbols for other individual functions
(such as $\sin$ and $\cos$), besides just the identity function,
are added to the language, interpreted in $\mathcal D$ by
the hyperreals represented thereby (such as $[\sin]$ and $[\cos]$), provided
those hyperreals' derivatives exist.

In terms of nonstandard extensions, Theorem \ref{firsttheorem}
says that there is a well-defined map which sends
every ${}^*f(\Omega)$ to ${}^*f'(\Omega)$.
For example, this map sends $e^\Omega+\Omega^3+\cos2\Omega$
to $e^\Omega+3\Omega^2-2\sin2\Omega$.

One might intuitively wonder whether $[f]'=0$ implies $f$ is constant
(at least ultrafilter often). The following proposition provides a counterexample.

\begin{proposition}
    There exists $f:\mathbb R\to\mathbb R$ such that $[f]'=0$
    but for every $r\in\mathbb R$, $\{x\in\mathbb R\,:\,f(x)=r\}\not\in p$.
\end{proposition}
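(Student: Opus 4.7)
The plan is to let $f$ be a step function with countably many distinct levels accumulating at $0$: put $f(x)=n$ for $x\in(1/(n{+}1),1/n)$ for each positive integer $n$, and $f(x)=0$ elsewhere. Off the discrete set $T:=\{1/n\,:\,n\geq 1\}\cup\{0\}$, $f$ is locally constant, so $f'\equiv 0$ there.

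The whole argument then hinges on a single lemma: \emph{$S:=\{1/n\,:\,n\geq 1\}\not\in p$.} Granting this, $\mathrm{dom}(f')\supseteq\mathbb R\setminus T\in p$, so $[f']_p$ exists and equals $0$. And for each real $r$, the set $\{x\,:\,f(x)=r\}$ is empty (if $r\not\in\mathbb N\cup\{0\}$), or an interval $(1/(n{+}1),1/n)$ disjoint from $(0,1/(n{+}1))\in p$ (if $r=n\geq 1$), or the union $(-\infty,0]\cup S\cup[1,\infty)$ (if $r=0$); in the last case each of the three pieces lies outside $p$ (the outer ones because they are disjoint from $(0,1)\in p$, the middle one by the lemma), so their union is outside $p$ as well.

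To prove the lemma, suppose for contradiction $S\in p$. By idempotence, $A:=\{y\in\mathbb R\,:\,S-y\in p\}\in p$. I claim $A\subseteq\{0\}$: if $S-y\in p$, then since $p\in 0^+$, $S-y$ must meet every interval $(0,\epsilon)$, which means that for every $\epsilon>0$ there is some $n$ with $y<1/n<y+\epsilon$. This fails for $y<0$ (take $\epsilon=-y$, since $1/n>0$) and for $y>0$ (because $\{1/n\,:\,1/n>y\}$ is finite and bounded strictly away from $y$), leaving only $y=0$. Since $\{0\}\not\in p$, we have $A\not\in p$, the needed contradiction. The main obstacle is precisely this step: it is where the hypotheses of idempotence and $p\in 0^+$ combine to forbid $p$ from concentrating on the discrete sequence $\{1/n\}$, and everything else in the construction is bookkeeping.
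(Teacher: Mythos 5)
Your proof is correct, but it takes a genuinely different route from the paper's. The paper takes $f$ to be a devil's staircase built on a Cantor set $C\not\in p$ (such a $C$ exists because $\mathbb R$ contains two disjoint Cantor sets and an ultrafilter cannot contain both); then $f'=0$ off $C$ gives $[f]'=0$, and the level sets fail to be in $p$ because $f$ is increasing and not flat on any $(0,\epsilon)$. Notably, that argument uses only $p\in 0^+$ and the ultrafilter property --- idempotence plays no role in the proposition itself --- and it produces a \emph{continuous, monotone} counterexample, which makes the failure of ``$[f]'=0$ implies locally constant'' more striking. Your construction replaces the citation to the disjoint-Cantor-sets result with an explicit step function, at the cost of continuity, and the burden shifts entirely to your lemma that $\{1/n\,:\,n\geq 1\}\not\in p$. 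Your proof of that lemma is sound: if $S\in p$ then idempotence forces $\{y\,:\,S-y\in p\}\in p$, and your case analysis correctly shows this set is contained in $\{0\}$, since for $y\neq 0$ the translate $S-y$ misses some interval $(0,\epsilon)\in p$. It is worth observing that your lemma genuinely \emph{needs} idempotence --- there are ultrafilters in $0^+$ concentrating on $\{1/n\,:\,n\geq 1\}$, namely any free ultrafilter containing every tail $\{1/n\,:\,n\geq N\}$ --- so your proof of the proposition depends on the standing idempotence hypothesis while the paper's does not. Both arguments are complete; yours is more self-contained (no external topological input), the paper's is more economical and yields a better-behaved witness.
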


\begin{proof}
    By Theorem 1.14 of \cite{bankston1979topological},
    there exist disjoint Cantor sets on $\mathbb R$.
    An ultrafilter cannot contain two disjoint sets, so
    there is some Cantor set $C$ on $\mathbb R$ with $C\not\in p$.
    Let $f$ be the devil's staircase based on $C$.
    Then $f'(x)=0$ for all $x\not\in C$ so $[f]'=0$,
    but $f$ is increasing, and is not flat on $(0,\epsilon)$ for
    any $\epsilon>0$, implying (since $p\in0^+$)
    that $\{x\in\mathbb R\,:\,f(x)=r\}\not\in p$ for all $r$.
\end{proof}

We have proven Corollary \ref{primewelldefdcorollary}
under the assumption that $p$ is idempotent and in $0^+$. Similar reasoning
would hold if $p$ were idempotent and in $0^-$ (i.e., if $p$ were required to
contain $(-\epsilon,0)$ for every positive real $\epsilon$). We currently do not
know whether Corollary \ref{primewelldefdcorollary} holds for any other
type of ultrafilter.

\subsection{Differential equations and the secant method}

Since $\bullet'$ takes (a subset of) ${}^*\mathbb R$ to ${}^*\mathbb R$,
one can attempt to solve (or approximately solve) differential
equations by using the secant method from numerical analysis,
which is traditionally only used to solve non-differential equations.
This is interesting because as far as we know, the secant method has not
previously been applicable to differential equations.
We illustrate this with an example in which the method finds a
correct solution in one step.

\begin{example}
    Solve the differential equation $y'-2x=0$ using the secant method,
    with initial guesses $y_0=x^2+x^3$ and $y_1=x^2-x^3$.
\end{example}

\textit{Solution.}
    Define $\alpha:{\subseteq}{}^*\mathbb R\to{}^*\mathbb R$ by
    $\alpha([f])=[f]'-2\Omega$ whenever $[f]'$ is defined.
    We desire a solution $[f]$ of the equation $\alpha([f])=0$.
    Since $\Omega=[x\mapsto x]$, it follows that $[f]'-2\Omega=[x\mapsto f'(x)-2x]$,
    so any such solution $[f]$ will yield a solution $y=f(x)$ to
    the differential equation $y'-2x=0$ (at least $p$-a.e.). Compute:
    \begin{align*}
        [f_0] &= \Omega^2+\Omega^3
            &\mbox{(initial guess $y_0=x^2+x^3$)}\\
        [f_1] &= \Omega^2-\Omega^3
            &\mbox{(initial guess $y_1=x^2-x^3$)}\\
        [f_2] &= [f_1] - \alpha([f_1])
            \frac{[f_1]-[f_0]}{\alpha([f_1])-\alpha([f_0])};
                &\mbox{(Secant method)}\\
        \alpha([f_1]) &= (\Omega^2-\Omega^3)'-2\Omega\\
            &= (2\Omega-3\Omega^2)-2\Omega;\\
        \alpha([f_0]) &= (\Omega^2+\Omega^3)'-2\Omega\\
            &= (2\Omega+3\Omega^2)-2\Omega;
    \end{align*}
    it follows that $[f_2]=\Omega^2$.
    This yields a solution $y=x^2$ to the original differential equation.
    \qed

We have not yet found any examples where this approach is more practical
than other approximate methods in differential equations. We hope that
either such examples can be found later, or, if not, that the lack of such
examples might provide insight into limitations of the secant method itself.
The point of this subsection is not so much to focus on the secant
method, but rather to illustrate the kind of things we hope might be
possible by numerically interpreting the theory of elementary calculus
functions.

\subsection{The well-definability of composition on a subset of the hyperreals}
\label{entirenumberssectn}

The work we have presented above is relevant to numerically modeling
subtheories of the theory of elementary calculus functions in a language
containing a unary function symbol $\bullet'$ for differentiation.
But one key axiom is missing from that theory, namely the chain rule,
since the chain rule also involves a binary function symbol $\circ$ for
composition. In this section, we introduce a subset of the hyperreals
suitable for numerically modeling subtheories of the theory of elementary
calculus functions in a language containing $\bullet'$ and $\circ$.
The following definition is motivated by the theory of complex analysis.

\begin{definition}
    (Entire numbers)
    \begin{enumerate}
        \item
        A function $f:\mathbb R\to\mathbb R$ is \emph{entire} if
        $f$ is infinitely differentiable at $0$ and $\forall x\in\mathbb R$,
        $f(x)=\sum_{k=0}^\infty f^{(k)}(0)x^k/k!$.
        \item
        A hyperreal number is \emph{entire} if it can be written as $[f]$
        for some entire $f:\mathbb R\to\mathbb R$.
    \end{enumerate}
\end{definition}

\begin{proposition}
\label{injectivityonentires}
    For all entire $f,g:\mathbb R\to\mathbb R$, if $[f]=[g]$ then $f=g$.
\end{proposition}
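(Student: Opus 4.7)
The plan is to show that if $[f]=[g]$ for entire $f,g$, then all Taylor coefficients of $f$ and $g$ at $0$ coincide, after which the defining property of entire functions forces $f=g$ pointwise on all of $\mathbb{R}$.

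First I would observe that an entire function in the sense of the paper is real-analytic on all of $\mathbb{R}$ (its Taylor series at $0$ converges globally), so $f$ and $g$ are infinitely differentiable everywhere. In particular $\mathrm{dom}(f^{(k)})=\mathrm{dom}(g^{(k)})=\mathbb{R}\in p$, so $[f^{(k)}]$ and $[g^{(k)}]$ exist for every $k\ge 0$. This puts us squarely in the regime where Theorem~\ref{firsttheorem} applies.

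Next I would iterate Theorem~\ref{firsttheorem}: from $[f]=[g]$ one deduces $[f']=[g']$, hence $[f'']=[g'']$, and by induction $[f^{(k)}]=[g^{(k)}]$ for every $k\ge 0$. Since each $f^{(k)}$ and $g^{(k)}$ is continuous at $0$, Lemma~\ref{neatlemma} gives
\[
    f^{(k)}(0)=\mathrm{st}([f^{(k)}])=\mathrm{st}([g^{(k)}])=g^{(k)}(0)
\]
for every $k$. Thus $f$ and $g$ have identical Taylor series at $0$, and since both are entire, $f(x)=\sum_k f^{(k)}(0)x^k/k!=\sum_k g^{(k)}(0)x^k/k!=g(x)$ for all $x\in\mathbb{R}$.

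The only subtle step is the legitimacy of invoking Theorem~\ref{firsttheorem} at each order; this is where one must verify that differentiating an entire function keeps us inside the class where Theorem~\ref{firsttheorem} applies (i.e., $[f^{(k)}]$ exists), but this is immediate from analyticity. The real engine is the combination of iterated Theorem~\ref{firsttheorem} (to transport equality of hyperreals through derivatives) with Lemma~\ref{neatlemma} (to read off standard Taylor coefficients from hyperreal equalities), and I do not anticipate any significant obstacle beyond this observation.
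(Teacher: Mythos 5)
Your argument is correct and is essentially identical to the paper's own proof: both iterate Theorem~\ref{firsttheorem} to get $[f^{(k)}]=[g^{(k)}]$ for all $k$, apply Lemma~\ref{neatlemma} to conclude $f^{(k)}(0)=g^{(k)}(0)$, and then invoke the defining series representation of entire functions. No issues.
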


\begin{proof}
    By well-known results from analysis, $f$ and $g$ are infinitely
    differentiable everywhere, so $[f^{(k)}]$ and $[g^{(k)}]$ exist
    for all $k$.
    By repeated applications of Theorem \ref{firsttheorem},
    each $[f^{(k)}]=[g^{(k)}]$. By Lemma \ref{neatlemma},
    it follows that each $f^{(k)}(0)=g^{(k)}(0)$.
    Since $f$ and $g$ are entire, this implies $f=g$.
\end{proof}

In the same way that we can think of the differential equation
$2x+2yy'=0$ as describing the family of all circles centered at the origin,
we can think of the equation
$y=\sum_{k=0}^\infty y^{(k)}(0)x^k/k!$ as describing the
family of all entire functions. The latter is much worse behaved than the
former: no two circles centered at the origin ever intersect each other,
but for all $(x_0,y_0)\in\mathbb R^2$ there exist distinct entire functions
whose graphs intersect at $(x_0,y_0)$. In this sense, we can say that the
real plane contains no ``critical points'' of $2x+2yy'=0$, but that every
point of the real plane is a ``critical point'' of
$y=\sum_{k=0}^\infty y^{(k)}(0)x^k/k!$.
We can interpret Proposition \ref{injectivityonentires} as saying that in
the hyperreal plane, every point on the vertical line $x=\Omega$ is a
``non-critical point'' of the family of all entire functions, for
the proposition says that no two distinct entire function graphs intersect
anywhere on this vertical line.

\begin{corollary}
\label{compositioncorollary}
    The operation $\circ$ defined on the entire numbers by
    $[f]\circ[g]=[f\circ g]$ (whenever $f$ and $g$ are entire)
    is well-defined.
\end{corollary}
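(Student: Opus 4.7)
The plan is to observe that well-definedness of $\circ$ on the entire numbers is essentially immediate from Proposition \ref{injectivityonentires}. That proposition says that the representation of an entire hyperreal as $[f]$ for an entire $f$ is unique \emph{at the level of the representing function itself}, not merely modulo the ultrafilter. Once one has such pointwise uniqueness of representatives, any operation defined via representatives is automatically well-defined on equivalence classes.

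In detail, suppose $x$ and $y$ are entire hyperreals and we are given two presentations $x=[f_1]=[f_2]$ and $y=[g_1]=[g_2]$ with $f_1,f_2,g_1,g_2:\mathbb R\to\mathbb R$ all entire. I need to show $[f_1\circ g_1]=[f_2\circ g_2]$. Applying Proposition \ref{injectivityonentires} to the pair $(f_1,f_2)$ gives $f_1=f_2$ as functions $\mathbb R\to\mathbb R$, and applying it to $(g_1,g_2)$ gives $g_1=g_2$ similarly. Therefore $f_1\circ g_1$ and $f_2\circ g_2$ are literally the same function, so a fortiori they represent the same hyperreal.

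Thus there is no genuine obstacle to overcome in proving this corollary. All the substantive work has already been done in Proposition \ref{injectivityonentires}, which in turn leverages Theorem \ref{firsttheorem} (applied iteratively to show that all derivatives at the hyperreal level agree) and Lemma \ref{neatlemma} (to extract equality of the real-valued derivatives at $0$ from equality of the hyperreals $[f^{(k)}]$ and $[g^{(k)}]$). Note also that the statement of the corollary only asserts well-definedness of $[f]\circ[g]$ as an element of ${}^*\mathbb R$; it does not assert closure, so I do not need the separate fact (true, but not required here) that the composition of entire functions is again entire.
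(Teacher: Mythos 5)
Your proof is correct and matches the paper's (implicit) argument exactly: the corollary is stated without a separate proof precisely because Proposition \ref{injectivityonentires} gives literal equality of entire representatives, which is what you use. Nothing further is needed.
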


Clearly with $\circ$ defined as in Corollary \ref{compositioncorollary} and
with $\bullet'$ defined as in Corollary \ref{primewelldefdcorollary},
the entire numbers satisfy the chain rule axiom,
$\forall x\forall y\,(x\circ y)'=(x'\circ y)y'$.

\subsection{The approximately space-filling nature of differentiation}

Clearly $\bullet'$ is linear over $\mathbb R$, in the sense that for all
$[f],[g]\in{}^*\mathbb R$ and $\lambda,\mu\in\mathbb R$,
$(\lambda[f]+\mu[g])'=\lambda [f]'+\mu [g]'$ if the derivatives
in question exist.
So how badly behaved could the graph $y=x'$ be?
We will show that it is approximately space-filling,
in the following sense.

\begin{definition}
    A subset $C\subseteq ({}^*\mathbb R)^2$
    is \emph{approximately space-filling} if for all
    $(x_0,y_0)\in\mathbb R^2$, there exists some $(x,y)\in C$
    such that $\mathrm{st}(x)=x_0$ and $\mathrm{st}(y)=y_0$.
\end{definition}

It is not hard to find functions ${}^*\mathbb R\to{}^{*}\mathbb R$
which are linear over $\mathbb R$ and whose graphs are
approximately space-filling. For example, if we consider ${}^*\mathbb R$
as a vector space over $\mathbb R$ and let $\mathcal B$ be a basis for it with
an infinitesimal basis element $v$, then
the projection $\pi(\cdots+\lambda v+\cdots)=\lambda$ is one such function.
Nevertheless, we find it interesting (even if the proof is quite simple)
that our derivative operator also has these properties.

\begin{proposition}
\label{approximatelyspacefillingprop}
    The hyperreal graph $y=x'$, i.e., the set of all $([f],[g])\in({}^*\mathbb R)^2$
    such that $[g]=[f]'$, is approximately space-filling.
\end{proposition}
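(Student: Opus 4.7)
The plan is to exhibit, for each target point $(x_0,y_0)\in\mathbb{R}^2$, an explicit witness of the form $([f],[f'])$ with the right standard parts. The natural candidate is the affine function
\[
    f(x) = x_0 + y_0\cdot x.
\]
Then $f'$ is the constant function $y_0$, which is defined everywhere, so $[f']$ exists and equals the standard real $y_0$ inside ${}^*\mathbb{R}$; thus $\mathrm{st}([f]') = \mathrm{st}(y_0) = y_0$.

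For the first coordinate, I would write $[f] = x_0 + y_0\Omega$ using linearity of the representation map and the definition $\Omega = [x\mapsto x]$. The key observation is that $\Omega$ is infinitesimal: applying Lemma \ref{neatlemma} to the identity function (which is continuous at $0$ with value $0$) gives $\mathrm{st}(\Omega) = 0$. Hence $\mathrm{st}([f]) = x_0 + y_0 \cdot 0 = x_0$, so the point $([f],[f'])$ lies on the hyperreal graph $y = x'$ and projects to $(x_0,y_0)$ under standard parts.

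There is no real obstacle here; the whole point of the proposition is that even though $\bullet'$ is $\mathbb{R}$-linear, it is wild enough that a one-parameter family of affine functions already lets the pair $([f],[f]')$ reach every standard point. The only place where one must be careful is in justifying $\mathrm{st}(\Omega) = 0$, and this is precisely where the hypothesis $p \in 0^+$ enters through Lemma \ref{neatlemma}.
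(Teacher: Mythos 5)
Your proof is correct and follows essentially the same route as the paper: the paper takes an arbitrary continuously differentiable $f$ with $f(0)=x_0$ and $f'(0)=y_0$ and applies Lemma \ref{neatlemma} to $f$ and $f'$, while you simply instantiate this with the affine witness $f(x)=x_0+y_0x$ and deduce $\mathrm{st}([f])=x_0$ from $\mathrm{st}(\Omega)=0$, which is Lemma \ref{neatlemma} applied to the identity. The underlying idea --- prescribe the value and derivative at $0$ and invoke the $0^+$ hypothesis --- is identical.
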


\begin{proof}
    For any $(x_0,y_0)\in\mathbb R^2$, let $f:\mathbb R\to\mathbb R$
    be continuously differentiable everywhere with $f(0)=x_0$ and $f'(0)=y_0$.
    By Lemma \ref{neatlemma}, $\mathrm{st}([f])=f(0)=x_0$ and
    $\mathrm{st}([f]')=\mathrm{st}([f'])=f'(0)=y_0$.
\end{proof}

Having established that the hyperreal graph $y=x'$ is approximately space-filling,
we will proceed to state two additional results about approximately space-filling
sets in general, which therefore apply in particular to said graph.

\begin{proposition}
\label{shadowprop}
    If $C\subseteq ({}^*\mathbb R)^2$ is approximately space-filling, then
    for any $X\subseteq \mathbb R^2$, there exists some
    $S\subseteq {}^*\mathbb R$ such that
    \[
        X = \{(\mathrm{st}(x),\mathrm{st}(y))\,:\,(x,y)\in C\mbox{ and }x\in S\}.
    \]
    In particular, for any $X\subseteq \mathbb R^2$, there exists some
    $S\subseteq {}^*\mathbb R$ such that
    \[
        X = \{(\mathrm{st}(x),\mathrm{st}(x'))\,:\,x\in S\}.
    \]
\end{proposition}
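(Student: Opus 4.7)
\emph{Proof plan.} The plan is to build $S$ by choosing, via the axiom of choice, one witness in $C$ for each point of $X$. Concretely, for each $(x_0,y_0)\in X$ the approximately space-filling hypothesis furnishes some pair $(a_{(x_0,y_0)},b_{(x_0,y_0)})\in C$ whose standard parts are $(x_0,y_0)$. I will collect their first coordinates, setting $S:=\{a_{(x_0,y_0)}:(x_0,y_0)\in X\}$. The forward inclusion $X\subseteq\{(\mathrm{st}(x),\mathrm{st}(y)):(x,y)\in C,\ x\in S\}$ will be immediate, since $(a_{(x_0,y_0)},b_{(x_0,y_0)})\in C$ has first coordinate in $S$ and the correct standard parts.

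The reverse inclusion is the heart of the argument. Given $(x,y)\in C$ with $x\in S$, I would write $x=a_{(x_0,y_0)}$ for some $(x_0,y_0)\in X$, so $\mathrm{st}(x)=x_0$; it remains to show $\mathrm{st}(y)\in X_{x_0}:=\{y':(x_0,y')\in X\}$. For the proposition's key application---the ``in particular'' clause about the graph $y=x'$---this will be automatic: by Corollary \ref{primewelldefdcorollary} the derivative relation is partially functional, so $b_{(x_0,y_0)}=[f]'$ is the only $y$ with $(a_{(x_0,y_0)},y)\in C$, and its standard part is $y_0$ by construction.

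The main obstacle will be extending the reverse inclusion to a fully general approximately space-filling $C$, where a chosen first coordinate $a_{(x_0,y_0)}$ might sit above several $y$'s in $C$ with inappropriate standard parts. My strategy there would be to refine the choice of $a_{(x_0,y_0)}$ within the monad of $x_0$ so that $\mathrm{st}(\{y:(a_{(x_0,y_0)},y)\in C\})\subseteq X_{x_0}$; producing such a refined choice is the delicate step, and I expect it to require either a saturation argument or an additional structural observation about $C$.
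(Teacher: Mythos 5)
Your construction is the right one, and for the case the paper actually needs---the ``in particular'' clause, where $C$ is the graph of the partial function $\bullet'$---your argument is complete: one witness per point of $X$ via the space-filling hypothesis, the forward inclusion by construction, and the reverse inclusion because each $x\in S$ sits under exactly one point of $C$, whose standard parts were chosen to lie in $X$. The same argument works verbatim whenever $C$ is the graph of a partial function ${}^*\mathbb R\to{}^*\mathbb R$. (The paper's own proof is the single word ``Straightforward,'' so there is nothing more to match against.)

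The obstacle you flag for general $C$, however, is not a delicate step awaiting a saturation argument---it is fatal, and you should not try to close it. Take $C=({}^*\mathbb R)^2$, which is approximately space-filling, and $X=\{(0,0)\}$. If $S$ contains any finite hyperreal $x$, the right-hand side contains all of $\{\mathrm{st}(x)\}\times\mathbb R$; if $S$ contains no finite element, the right-hand side is empty; either way it is never $\{(0,0)\}$. No refinement of $a_{(x_0,y_0)}$ within the monad of $x_0$ can help, since every fiber of this $C$ is all of ${}^*\mathbb R$. So the first, general clause of the proposition is false as literally stated; it holds (by exactly your argument) under the additional hypothesis that $C$ is a partial function graph, which covers the ``in particular'' clause and the only later use of the result. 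Your proposal is thus a correct proof of the special case together with a correct diagnosis that the general case does not follow from your construction---you only stopped short of noticing that it cannot follow from any construction.
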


\begin{proof}
    Straightforward.
\end{proof}

\begin{proposition}
    Suppose $C\subseteq ({}^*\mathbb R)^2$ is approximately space-filling
    where $C$ is the graph (over ${}^*\mathbb R$) of $y=F(x)$ for some
    $F:{\subseteq}{}^*\mathbb R\to{}^*\mathbb R$.
    Let $X\subseteq\mathbb R^2$ be the graph of the equation $y=f(x)$
    for some everywhere-differentiable $f:\mathbb R\to\mathbb R$.
    If $S$ is as in Proposition \ref{shadowprop}, then
    $F|_S$ has the same slope as $y=f(x)$ in the following sense:
    for every $x\in S$, for every real $\epsilon>0$, there exists some
    real $\delta>0$ such that for all $h\in {}^*\mathbb R$, if $0<|h|<\delta$
    and $x+h\in S$ then $|f'(\mathrm{st}(x))-(F(x+h)-F(x))/h|<\epsilon$.
    In particular, this is true when $F=\bullet'$.
\end{proposition}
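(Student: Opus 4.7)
The plan is to exploit the following observation about the construction of $S$ in Proposition~\ref{shadowprop}: since $X$ is the graph of the function $f$, the natural witness $S$ contains exactly one hyperreal per point $x_0 \in \mathrm{dom}(f)$, namely some $x$ with $\mathrm{st}(x) = x_0$ and $\mathrm{st}(F(x)) = f(x_0)$ (such an $x$ exists by approximate space-filling of $C$). Hence distinct elements of such an $S$ automatically have distinct standard parts, and I would take $S$ to be chosen this way.

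Next, fix $x \in S$ and real $\epsilon > 0$, and write $x_0 := \mathrm{st}(x)$. First I would invoke differentiability of $f$ at $x_0$ to pick a real $\delta_1 > 0$ with $|f'(x_0) - (f(x_0 + k) - f(x_0))/k| < \epsilon / 2$ for every real $k$ with $0 < |k| < \delta_1$. Set $\delta := \delta_1 / 2$. Now suppose $h \in {}^*\mathbb{R}$ satisfies $0 < |h| < \delta$ and $x + h \in S$. Because $x$ and $x + h$ are distinct elements of $S$, they have distinct standard parts, so $h$ is not infinitesimal; letting $k := \mathrm{st}(h)$, we get $0 < |k| \le \delta < \delta_1$.

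The remaining step is a routine infinitesimal calculation. Substitute the three approximations $F(x) \approx f(x_0)$ and $F(x + h) \approx f(x_0 + k)$ (both given by the defining property of $S$) and $h \approx k$ into $(F(x + h) - F(x))/h$; since $k$ is a nonzero real and the ``$\approx$'' errors are infinitesimal, the quotient is infinitesimally close to the genuine real difference quotient $(f(x_0 + k) - f(x_0))/k$, which by our choice of $\delta_1$ lies within $\epsilon / 2$ of $f'(x_0)$. Hence $|f'(x_0) - (F(x + h) - F(x))/h|$ is infinitesimally close to a real number strictly below $\epsilon / 2$, and so is itself strictly below $\epsilon$. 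The ``in particular'' clause for $F = \bullet'$ is then immediate, since Proposition~\ref{approximatelyspacefillingprop} supplies the required approximate space-filling.

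The main obstacle is not computational but interpretational: a priori, many different sets can witness Proposition~\ref{shadowprop}, including pathological ones containing pairs $x, x + h$ with $h$ infinitesimal, on which we have no control over $F(x + h) - F(x)$ and the argument collapses. I rely on reading ``$S$ as in Proposition~\ref{shadowprop}'' as ``an $S$ produced by the natural construction''; equivalently, one can implicitly strengthen that proposition so that, whenever $X$ is a function graph, the delivered $S$ makes $x \mapsto \mathrm{st}(x)$ injective. Once that is granted, no further subtleties arise.
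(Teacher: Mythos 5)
Your argument is correct, and it is exactly the ``straightforward'' proof the paper omits (the paper's entire proof of this proposition is the word \emph{Straightforward}): reduce the hyperreal difference quotient to the real difference quotient at $k=\mathrm{st}(h)$, which is legitimate because $k$ is a nonzero real while all the substitution errors are infinitesimal, and then invoke differentiability of $f$ at $\mathrm{st}(x)$. Your interpretational caveat is not pedantry but a genuine defect of the statement as written: for an arbitrary witness $S$ of Proposition \ref{shadowprop} the claim is false. Concretely, with $F=\bullet'$ and $p\in 0^+$, take a natural $S$ containing $x=[f_0]$ with $f_0$ continuously differentiable, and adjoin the extra point $x+h$ where $h=[t\mapsto t^2]$; the set identity of Proposition \ref{shadowprop} is preserved since $\mathrm{st}(x+h)=\mathrm{st}(x)$ and $\mathrm{st}((x+h)')=\mathrm{st}([t\mapsto f_0'(t)+2t])=f_0'(0)$, yet $h$ is a nonzero infinitesimal and $(F(x+h)-F(x))/h=[t\mapsto 2/t]$ is infinite, so no $\delta$ works. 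So the proposition needs your reading --- $S$ is the canonically constructed witness, one representative per point of $X$, so that $\mathrm{st}$ is injective on $S$ --- and under that reading your proof is complete.
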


\begin{proof}
    Straightforward.
\end{proof}

\section{A variant of finite calculus using an idempotent ultrafilter on $\mathbb N$}
\label{finitecalculussection}

In so-called finite calculus, one considers the ``derivative''
$f(x+1)-f(x)$ of $f$, see Section 2.6 of \cite{concrete}.
In this section, we will investigate a variant of this derivative,
namely $\Delta f(x)={}^*f(x+\Omega)-f(x)$ where $\Omega=[n\mapsto n]$
is the canonical hyperreal in the hyperreals constructed using
an idempotent ultrafilter on $\mathbb N=\{1,2,\ldots\}$ (note that we
omit $0$ from $\mathbb N$).
We will show that this finite derivative $\Delta$ has unexpected
connections to the standard derivative, and that the equivalence class
(in an iterated ultrapower construction) of
$(\Delta f)|_\mathbb N$ is well-defined as a function of
$[f|_\mathbb N]$ (we elaborate what we mean by ``in an iterated ultrapower
construction'' in Remark \ref{iteratedultrafilterremark} below).

The following Definitions \ref{idempotentultrafilteronNdefn}
and \ref{definitionofhyperrealsfromN},
and Lemma \ref{idempotentultrafiltersonNexist} are
$\mathbb N$-focused
analogies of the $\mathbb R$-focused
Definitions \ref{idempotentultrafiltersonRdefn}
and \ref{hyperrealsdefn}, and Lemma \ref{zeroplushasidempotentultrafilterlemma}
above, respectively. We prefer this slight redundancy (instead of defining
everything in higher generality) for the sake of concreteness.

\begin{definition}
\label{idempotentultrafilteronNdefn}
    (Idempotent ultrafilters on $\mathbb N$)
    \begin{enumerate}
        \item
        If $S\subseteq \mathbb N$ and $n\in\mathbb N$, let
        $S-n=\{x-n\,:\,\mbox{$x\in S$ and $x-n \in\mathbb N$}\}$.
        \item
        An ultrafilter $q$ on $\mathbb N$ is \emph{idempotent} if
        $q=\{S\subseteq \mathbb N\,:\, \{n\in\mathbb N\,:\,S-n\in q\}\in q\}$.
    \end{enumerate}
\end{definition}

\begin{lemma}
\label{idempotentultrafiltersonNexist}
    Idempotent ultrafilters on $\mathbb N$ exist and are free.
\end{lemma}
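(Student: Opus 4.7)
The plan is to split the lemma into its two assertions and handle them independently. For existence, I would mimic the proof of Lemma \ref{zeroplushasidempotentultrafilterlemma} by appealing to the same source \cite{hindman2011algebra}: Ellis's lemma applied to the compact right-topological semigroup $(\beta\mathbb N,+)$ produces an idempotent $q$ with $q+q=q$, and unpacking the definition of $+$ in $\beta\mathbb N$ shows that this algebraic condition coincides with the combinatorial one in Definition \ref{idempotentultrafilteronNdefn}. So the cited results deliver existence immediately.

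For freeness, I would argue by contrapositive and show that no principal ultrafilter on $\mathbb N$ can be idempotent. Fix $n\in\mathbb N$, let $q_n$ denote the principal ultrafilter concentrated at $n$, and apply the idempotence condition to the set $\{n\}\in q_n$. By the paper's definition, $\{n\}-m = \{n-m\}$ when $n-m\in\mathbb N$ and $\emptyset$ otherwise, so $\{n\}-m\in q_n$ iff $n-m=n$, i.e., iff $m=0$. Since $0\notin\mathbb N$ under the convention adopted in this section, the set $\{m\in\mathbb N:\{n\}-m\in q_n\}$ is empty and therefore not in $q_n$, contradicting idempotence applied to $\{n\}$.

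The only real subtlety is aligning the paper's idiosyncratic definition of $S-n$ (which drops non-positive differences) with the classical $\beta\mathbb N$ semigroup formulation; once that translation is in place, both existence and freeness are routine, and the crucial role of excluding $0$ from $\mathbb N$ becomes transparent --- had $0$ been included, the principal ultrafilter at $0$ would furnish a trivial idempotent and the freeness half of the lemma would fail.
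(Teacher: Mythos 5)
Your proposal is correct and matches the paper's approach, which for this lemma simply cites \cite{hindman2011algebra} for both claims. Your explicit verification of freeness---applying the idempotence condition to the singleton $\{n\}$ and noting that $\{m\in\mathbb N:\{n\}-m\in q_n\}$ is empty because $0\notin\mathbb N=\{1,2,\ldots\}$---is a correct and worthwhile filling-in of the detail the paper leaves to the citation, and your observation that including $0$ would make the principal ultrafilter at $0$ a (trivial) idempotent is accurate.
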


\begin{proof}
    See \cite{hindman2011algebra}.
\end{proof}

\begin{definition}
\label{definitionofhyperrealsfromN}
    If $q$ is a free ultrafilter on $\mathbb N$, we write ${}^*\mathbb R_q$
    for the hyperreal numbers constructed using $q$ in the usual way, and
    for each $f:\mathbb N\to\mathbb R$ we write $[f]_q$ for the hyperreal
    represented by $f$.
    If $q$ is clear from context, we will write
    ${}^*\mathbb R$ for ${}^*\mathbb R_q$, $[f]$ for $[f]_q$,
    and ${}^*f$ for the nonstandard extension ${}^*f:{}^*\mathbb R\to{}^*\mathbb R$
    of $f:\mathbb R\to\mathbb R$.
\end{definition}

For the remainder of the paper, we fix an idempotent ultrafilter $q$ on $\mathbb N$.
Lemma \ref{bergelsonlemma} below will replace $0^+$.

\begin{definition}
\label{remainderdefn}
    For each $\gamma\in\mathbb R^+\backslash\mathbb Q$, we define
    $\RM_\gamma:\mathbb N\to (-\gamma/2,\gamma/2)$ as follows
    (we pronounce $\RM$ as ``remainder'').
    For each $n\in\mathbb N$, we define $\RM_\gamma(n)=n-k\gamma$ where
    $k\gamma$ is the closest integer multiple of $\gamma$ to $n$ (there is
    a unique such closest integer multiple of $\gamma$ because $\gamma$
    is irrational and $n\in\mathbb N$).
\end{definition}

Note that the following lemma depends on $q$ being idempotent.

\begin{lemma}
\label{bergelsonlemma}
    Let $\gamma\in\mathbb R^+\backslash \mathbb Q$.
    For every real $\epsilon>0$,
    $\{n\in\mathbb N\,:\,|\RM_\gamma(n)|<\epsilon\}\in q$.
\end{lemma}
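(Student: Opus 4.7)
The plan is to exploit the fact that $n \mapsto n + \gamma\mathbb{Z}$ is a semigroup homomorphism from $(\mathbb{N},+)$ into the compact abelian group $\mathbb{R}/\gamma\mathbb{Z}$. Heuristically, the $q$-limit of any such homomorphism must be an idempotent in a group, hence zero; I will extract this conclusion directly from the defining property of idempotent ultrafilters, without invoking the general theory of $\beta\mathbb{N}$.

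Assume without loss of generality that $\epsilon < \gamma$ (otherwise $|\RM_\gamma(n)| < \gamma/2 \leq \epsilon$ trivially for every $n$). Partition $(-\gamma/2, \gamma/2)$ into finitely many half-open subintervals each of length strictly less than $\epsilon/2$. Since $q$ is an ultrafilter, exactly one of the corresponding preimages under $\RM_\gamma$ lies in $q$; call this preimage $T$ and the associated subinterval $I$. By idempotency of $q$, the set $\{m \in \mathbb{N} : T - m \in q\}$ lies in $q$, and intersecting with $T$ (also in $q$) produces a nonempty set, so I may choose an $m \in T$ with $T - m \in q$.

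It then suffices to show $T - m \subseteq \{n : |\RM_\gamma(n)| < \epsilon\}$, since $T - m \in q$. For any $n \in T - m$, both $m$ and $m+n$ belong to $T$, so $\RM_\gamma(m), \RM_\gamma(m+n) \in I$. Writing each $\RM_\gamma(k) = k - \ell_k \gamma$ for an integer $\ell_k$, one sees that $\RM_\gamma(m) + \RM_\gamma(n) - \RM_\gamma(m+n) = j\gamma$ for some integer $j$ with $|j| \leq 1$ (because each of the three terms lies in $(-\gamma/2, \gamma/2)$, forcing the sum to have absolute value less than $3\gamma/2$). Rearranging and using $\RM_\gamma(m+n) - \RM_\gamma(m) \in I - I \subseteq (-\epsilon/2, \epsilon/2)$ yields $\RM_\gamma(n) + j\gamma \in (-\epsilon/2, \epsilon/2)$; combining this with $\RM_\gamma(n) \in (-\gamma/2, \gamma/2)$ and $\epsilon < \gamma$ forces $j = 0$, so $|\RM_\gamma(n)| < \epsilon/2 < \epsilon$, as required.

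The main obstacle is the mod-$\gamma$ wrap-around in the identity $\RM_\gamma(m+n) \equiv \RM_\gamma(m) + \RM_\gamma(n) \pmod{\gamma}$: the subintervals must be chosen strictly shorter than $\epsilon/2$ (rather than $\epsilon$ itself) so that the resulting buffer rules out the nonzero integer shifts $j = \pm 1$ when transferring smallness from the pair $(m, m+n)$ back to $n$.
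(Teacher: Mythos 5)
Your argument is correct, but it is genuinely different from what the paper does: the paper disposes of this lemma with a one-line citation to Theorem 7.2 of Bergelson's survey, treating the recurrence-near-zero property of idempotent ultrafilters as a black box, whereas you reprove the needed special case from scratch. Your mechanism --- partition $(-\gamma/2,\gamma/2)$ into cells of length less than $\epsilon/2$, let $T$ be the unique cell-preimage in $q$, use idempotency to pick $m\in T$ with $T-m\in q$, and then show $T-m$ lands in $\{n:|\RM_\gamma(n)|<\epsilon\}$ --- is sound; in particular the additivity defect $\RM_\gamma(m)+\RM_\gamma(n)-\RM_\gamma(m+n)=j\gamma$ with $|j|\le 1$ is right, and your choice of cell length $\epsilon/2$ correctly rules out $j=\pm1$ given $\epsilon<\gamma$. (One cosmetic slip: rearranging your identity gives $\RM_\gamma(n)-j\gamma\in(-\epsilon/2,\epsilon/2)$ rather than $\RM_\gamma(n)+j\gamma$, but since $j$ ranges symmetrically over $\{-1,0,1\}$ this changes nothing.) What your route buys is self-containedness: the reader sees exactly where idempotency enters, via the same $T\cap\{m:T-m\in q\}$ device the paper itself uses in Theorems \ref{firsttheorem} and \ref{secondmainthm}, so your proof arguably fits the paper's narrative better than the citation does. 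What the citation buys is brevity and access to the more general statement (well-distribution of $q$-limits of homomorphisms into compact groups), of which your argument handles only the instance actually needed.
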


\begin{proof}
    Follows from Theorem 7.2 of \cite{bergelson2010ultrafilters}.
\end{proof}

For the rest of the section, let $\Omega=[n\in\mathbb N\mapsto n]$ (the canonical
element of ${}^*\mathbb R\backslash \mathbb R$).

\begin{definition}
\label{finitederivdefn}
    (Finite derivative)
    For each $f:\mathbb R\to\mathbb R$, we define
    the finite derivative $\Delta f:\mathbb R\to{}^*\mathbb R$ by
    $\Delta f(x)={}^*f(x+\Omega)-f(x)$.
\end{definition}

The following theorem shows that, when restricted to $\gamma$-periodic functions
for a fixed irrational real number $\gamma>0$, $\Delta$ is a constant multiple of
$\bullet'$ (up to an infinitesimal error), at least where $f'$ exists.
And even when $f'$ does not exist, $\Delta$ (times the same constant multiple)
sometimes still provides information about the slope in question.

\begin{theorem}
\label{trickygammatheorem}
    Let $\gamma\in\mathbb R^+\backslash \mathbb Q$, and let
    $f:\mathbb R\to\mathbb R$ be $\gamma$-periodic. Let $x\in\mathbb R$.
    \begin{enumerate}
        \item
        If $f'(x)$ exists, then
        $\mathrm{st}(\Delta f(x)/[\RM_\gamma])=f'(x)$.
        \item
        If $f'(x)$ fails to exist because $\lim_{h\to 0}(f(x+h)-f(x))/h$
        diverges to $\infty$ (resp.\ $-\infty$) then $\Delta f(x)/[\RM_\gamma]$
        is infinite (resp.\ negative infinite).
        \item
        Let $g:\mathbb R\to\mathbb R$ be $\gamma$-periodic.
        If both $f'(x)$ and $g'(x)$ fail to exist because
        $\lim_{h\to 0}(f(x+h)-f(x))/h$ and $\lim_{h\to 0}(g(x+h)-g(x))/h$
        both diverge to $\infty$ but $\lim_{h\to 0}(f(x+h)-f(x))/h$ diverges
        to $\infty$ faster (i.e., there exists $\delta>0$ such that
        $(f(x+h)-f(x))/h>(g(x+h)-g(x))/h$ whenever $0<|h|<\delta$) then
        $\Delta f(x)/[\RM_\gamma]>\Delta g(x)/[\RM_\gamma]$. Similarly for $-\infty$.
    \end{enumerate}
\end{theorem}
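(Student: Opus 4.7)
The plan is to exploit the $\gamma$-periodicity of $f$ to reduce the finite derivative to a classical difference quotient evaluated at a well-chosen small step. For each $n\in\mathbb{N}$, write $n=\RM_\gamma(n)+k_n\gamma$ with $k_n\in\mathbb{Z}$; then $\gamma$-periodicity gives $f(x+n)=f(x+\RM_\gamma(n))$, so $\Delta f(x)=[n\mapsto f(x+\RM_\gamma(n))-f(x)]_q$. Since $\gamma$ is irrational and $n\in\mathbb{N}$, $\RM_\gamma(n)\neq 0$ for every $n$, whence $[\RM_\gamma]\neq 0$ and $\Delta f(x)/[\RM_\gamma]$ equals the equivalence class of the sequence of classical difference quotients $(f(x+\RM_\gamma(n))-f(x))/\RM_\gamma(n)$.

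Next I would invoke Lemma \ref{bergelsonlemma}: for every real $\delta>0$ the set $\{n:|\RM_\gamma(n)|<\delta\}$ belongs to $q$, and since $\RM_\gamma$ never vanishes we in fact have $\{n:0<|\RM_\gamma(n)|<\delta\}\in q$. So $q$-often the representing sequence of $\Delta f(x)/[\RM_\gamma]$ is literally the classical difference quotient at a nonzero step of size $<\delta$. Moreover $\RM_\gamma(n)$ takes both signs within any window around $0$, so the two-sided nature of the classical limits in the hypotheses is fully available.

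Each of (1)--(3) then follows by transferring a classical $\epsilon$-$\delta$ (or $M$-$\delta$) statement into a $q$-often statement about the representing sequence. For (1), given real $\epsilon>0$ choose $\delta>0$ witnessing the definition of $f'(x)$; by the previous paragraph $q$-often the difference quotient lies within $\epsilon$ of $f'(x)$, hence $|\Delta f(x)/[\RM_\gamma]-f'(x)|\leq\epsilon$ in ${}^*\mathbb{R}$, and since $\epsilon$ is arbitrary, $\mathrm{st}(\Delta f(x)/[\RM_\gamma])=f'(x)$. Part (2) is the same argument with ``within $\epsilon$ of $f'(x)$'' replaced by ``greater than $M$'' (resp.\ ``less than $-M$'') for arbitrary real $M$, showing $\Delta f(x)/[\RM_\gamma]$ exceeds every real (resp.\ is below every real), hence is positive infinite (resp.\ negative infinite). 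Part (3) is the simplest: the assumed $\delta>0$ on which the $f$-difference quotient strictly exceeds the $g$-difference quotient pointwise immediately lifts, via a single $q$-often application of Lemma \ref{bergelsonlemma}, to the inequality $\Delta f(x)/[\RM_\gamma]>\Delta g(x)/[\RM_\gamma]$ in ${}^*\mathbb{R}$; the $-\infty$ variant is symmetric.

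The main obstacle is really just the initial rewrite, which requires a careful interplay between the definition of $\RM_\gamma$ and $\gamma$-periodicity; after that the proof is largely mechanical, and the substantive number-theoretic content is entirely encapsulated in Lemma \ref{bergelsonlemma}, which is where idempotency of $q$ does all the work (no further property of $q$ is needed downstream). A minor point worth checking is that when comparing the resulting hyperreal to a \emph{real} $\epsilon$ or $M$, one should phrase things so that the ``$q$-often'' statement is a weak inequality in the quotient (so that $\leq\epsilon$ for every $\epsilon>0$ yields the standard-part conclusion in (1), while strict inequalities in (3) do lift because the underlying pointwise inequality is strict on a set in $q$).
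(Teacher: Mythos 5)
Your proposal is correct and follows essentially the same route as the paper's proof: use $\gamma$-periodicity to rewrite the representing sequence of $\Delta f(x)/[\RM_\gamma]$ as the classical difference quotients at steps $\RM_\gamma(n)$, then invoke Lemma \ref{bergelsonlemma} to transfer the relevant $\epsilon$--$\delta$ (or $M$--$\delta$, or pointwise-comparison) statement to a $q$-often statement. The paper writes out only part (1) and notes that (2) and (3) are similar; your treatment of all three parts matches that template.
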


\begin{proof}
    (1) Fix $x\in \mathbb R$ such that $f'(x)$ exists.
    Define $g:\mathbb N\to\mathbb R$ by
    $g(n)=(f(x+n)-f(x))/\RM_\gamma(n)$,
    then $[g]=\Delta f(x)/[\RM_\gamma]$.
    To show $\mathrm{st}([g])=f'(x)$,
    it suffices to show that for every real $\epsilon>0$,
    $\{n\in\mathbb N\,:\,|f'(x)-g(n)|<\epsilon\}\in q$.
    Fix $\epsilon$.
    By definition of $f'$, there is some $\delta>0$
    such that
    $\left|f'(x)-(f(x+h)-f(x))/h\right|<\epsilon$
    whenever $0<|h|<\delta$.
    Let $S=\{n\in\mathbb N\,:\, |\RM_\gamma(n)|<\delta\}$.
    By Lemma \ref{bergelsonlemma}, $S\in q$.
    We claim that $|f'(x)-g(n)|<\epsilon$ for all $n\in S$.
    Let $n\in S$. Then
    \begin{align*}
        |f'(x)-g(n)|
            &= |f'(x)-(f(x+n)-f(x))/\RM_\gamma(n)|
                &\mbox{(Def.\ of $g$)}\\
            &= |f'(x)
                -(f(x+\RM_\gamma(n))-f(x))/\RM_\gamma(n)|
                &\mbox{($f$ is $\gamma$-periodic)}\\
            &< \epsilon.
                &\mbox{($0<|\RM_\gamma(n)|<\delta$)}
    \end{align*}

    The proofs of (2) and (3) are similar to (and easier than) the proof of (1).
\end{proof}

In particular,
$\Delta(f\circ g)(x)/[\RM_\gamma]
\approx
(\Delta f(g(x))/[\RM_\gamma])(\Delta g(x)/[\RM_\gamma])$
(with infinitesimal error)
provided $f'(g(x))$ and $g'(x)$ exist and $g$ is $\gamma$-periodic;
thus $\Delta/[\RM_\gamma]$
satisfies a chain rule. Contrast this with Graham, Knuth and Patashnik's
claim that ``there's no corresponding chain rule of finite calculus''
\cite{concrete}.

Without the idempotency requirement, it would be possible to find
free $q\in\beta\mathbb N$ so as to falsify Theorem \ref{trickygammatheorem}.
For example, one could choose $q$ such that
$\{n\in\mathbb N\,:\,\RM_\gamma(n)\in (\frac\gamma3,\frac{2\gamma}3)\}\in q$.
Then for any $\gamma$-periodic $f:\mathbb R\to\mathbb R$
such that $f(0)=f'(0)=0$ and $f(x)=1$ for all $x\in (\frac\gamma3,\frac{2\gamma}3)$,
it would follow that $\Delta f(0)=1$, implying
$\frac3{2\gamma}<\Delta f(0)/[\RM_\gamma]< \frac3\gamma$,
making the conclusion of
Theorem \ref{trickygammatheorem} (part 1) impossible at $x=0$.

As in Section \ref{mainsection}, we would like to transform the
derivative operation $\Delta:\mathbb R^\mathbb R\to ({}^*\mathbb R)^{\mathbb R}$
into a well-defined derivative on hyperreals.
But since $\Delta f(x)$ is itself hyperreal,
we will need to be careful.

\begin{definition}
\label{hyperhyperrealsdefn}
    For every $f:\mathbb N\to {}^*\mathbb R$, we write
    $\llbracket f\rrbracket$ for the equivalence class of
    $f$ modulo the equivalence relation defined by declaring
    that $h,g:\mathbb N\to {}^*\mathbb R$ are equivalent
    iff $\{n\in\mathbb N\,:\,h(n)=g(n)\}\in q$.
    We identify each $r\in\mathbb R$ with $\llbracket n\mapsto r\rrbracket$.
\end{definition}

\begin{remark}
\label{iteratedultrafilterremark}
    Suppose $f:\mathbb N\to{}^*\mathbb R$.
    Using Proposition 6.5.2 of Chang and Keisler \cite{chang1990model},
    one can in fact think of $\llbracket f\rrbracket$ as being a
    hyperreal number, but in a different copy of the hyperreals.
    To be more precise, one can think of $\llbracket f\rrbracket$
    as a hyperreal in ${}^*\mathbb R_{q\times q}$, where $\times$ is the
    filter product defined shortly before Proposition 6.2.1
    of \cite{chang1990model}.
\end{remark}

The following theorem will allow us to well-define
$\Delta [f]=\llbracket\Delta f\rrbracket$.

\begin{theorem}
\label{secondmainthm}
    Let $f,g:\mathbb R\to\mathbb R$.
    If $[f|_\mathbb N]=[g|_\mathbb N]$ then
    $\llbracket\Delta f|_\mathbb N\rrbracket
    =\llbracket\Delta g|_\mathbb N\rrbracket$.
\end{theorem}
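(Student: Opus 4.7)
The plan is to mirror the proof of Theorem~\ref{firsttheorem}, using idempotency of $q$ to pass from a single ``$f=g$'' set to a $q$-large set on which the finite difference operators also agree. The hypothesis $[f|_\mathbb N]_q=[g|_\mathbb N]_q$ supplies some $S_0\in q$ on which $f=g$; what must be shown is that $\{n\in\mathbb N\,:\,\Delta f(n)=\Delta g(n)\}\in q$, where the equality is inside ${}^*\mathbb R_q$.

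First I would unpack $\Delta f(n)$ and $\Delta g(n)$ as ultrapower classes. Since $n\in\mathbb N$ is standard, $n+\Omega=[m\mapsto n+m]_q$, and the standard identity ${}^*f([h]_q)=[f\circ h]_q$ yields
\[
    \Delta f(n) \;=\; [m\mapsto f(n+m)-f(n)]_q,
\]
and analogously for $g$. Thus the single-point equality $\Delta f(n)=\Delta g(n)$ in ${}^*\mathbb R_q$ is itself a $q$-large-set statement, namely $\{m\in\mathbb N\,:\,f(n+m)-f(n)=g(n+m)-g(n)\}\in q$.

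Next I would invoke idempotency. Set $T:=\{n\in\mathbb N\,:\,S_0-n\in q\}$; by idempotency of $q$, $T\in q$, hence $S_0\cap T\in q$. For any $n\in S_0\cap T$: (i) $n\in S_0$ gives $f(n)=g(n)$; (ii) every $m\in S_0-n$ satisfies $n+m\in S_0$, hence $f(n+m)=g(n+m)$. So $f(n+m)-f(n)=g(n+m)-g(n)$ for every $m$ in the $q$-large set $S_0-n$, which by the previous paragraph is exactly what is needed to conclude $\Delta f(n)=\Delta g(n)$. Since $S_0\cap T\in q$, the theorem follows.

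The main obstacle is bookkeeping rather than anything substantive: because the values $\Delta f(n)$ live in ${}^*\mathbb R_q$ rather than in $\mathbb R$, equality at a single $n$ is already an ``ultrafilter-often'' assertion, so the proof has a nested ``$q$-often on the outside, $q$-often on the inside'' structure. Idempotency is precisely the tool that makes the two layers compatible, playing here exactly the role it played in Theorem~\ref{firsttheorem}, with the discrete translate $S_0-n$ replacing the ``$h$ arbitrarily near $0$ inside $S-x$'' step from the earlier proof.
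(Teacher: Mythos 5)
Your proof is correct and follows essentially the same route as the paper's: extract a $q$-large set $S_0$ where $f=g$, intersect with $\{n : S_0-n\in q\}$ via idempotency, and observe that for $n$ in the intersection the difference quotients agree on the $q$-large set $S_0-n$, which is exactly the single-point equality $\Delta f(n)=\Delta g(n)$ in ${}^*\mathbb R_q$. Your explicit unpacking of $\Delta f(n)$ as $[m\mapsto f(n+m)-f(n)]_q$ is a slightly more detailed rendering of the step the paper defers to ``a similar argument as in the proof of Theorem~\ref{firsttheorem}.''
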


\begin{proof}
    Assume $[f|_\mathbb N]=[g|_\mathbb N]$.
    We must show $\{n\in\mathbb N\,:\,\Delta f(n)=\Delta g(n)\}\in q$.
    Since $[f|_\mathbb N]=[g|_\mathbb N]$, there is some $S\in q$ such that
    $f(n)=g(n)$ for all $n\in S$.
    Since $q$ is idempotent, $\{n\in\mathbb N\,:\,S-n\in q\}\in q$.
    Thus $S\cap \{n\in\mathbb N\,:\,S-n\in q\}\in q$.
    We claim $\Delta f(n)=\Delta g(n)$
    for all $n\in S\cap \{n\in\mathbb N\,:\,S-n\in q\}$.
    Fix any such $n$.
    By construction, $S-n\in q$.
    By a similar argument as in the proof of Theorem \ref{firsttheorem},
    for all $h\in S-n$,
    $f(n+h)-f(n)=g(n+h)-g(n)$.
    Thus $[h\mapsto f(n+h)-f(n)]=[h\mapsto g(n+h)-g(n)]$,
    in other words ${}^*f(n+\Omega)-f(n)={}^*g(n+\Omega)-g(n)$,
    in other words $\Delta f(n)=\Delta g(n)$.
\end{proof}

\begin{corollary}
\label{welldefdfinitederivcorollary}
    The finite derivative $\Delta:{}^*\mathbb R\to {}^{*}\mathbb R_{q\times q}$
    defined by $\Delta[f]=\llbracket \Delta f\rrbracket$ is well-defined.
\end{corollary}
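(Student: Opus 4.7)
The plan is to deduce this corollary almost mechanically from Theorem \ref{secondmainthm}. The one genuine wrinkle is interpretive: a representative of $[f]\in{}^*\mathbb R$ is a function $f:\mathbb N\to\mathbb R$, whereas the finite derivative of Definition \ref{finitederivdefn} is only defined for functions on $\mathbb R$ (since it invokes the nonstandard extension ${}^*f$). So the first step is to observe that every $f:\mathbb N\to\mathbb R$ extends to some $F:\mathbb R\to\mathbb R$ with $F|_\mathbb N=f$ (e.g., the extension that is $0$ off $\mathbb N$, in the spirit of Convention \ref{undefdconvention}). Given such an $F$, the function $\Delta F|_\mathbb N:\mathbb N\to{}^*\mathbb R$ is a legitimate representative of an element of ${}^*\mathbb R_{q\times q}$ in the sense of Definition \ref{hyperhyperrealsdefn}, so we set $\Delta[f]:=\llbracket \Delta F|_\mathbb N\rrbracket$.

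What remains is to check that this value does not depend either on the chosen representative of $[f]$ or on the chosen extension of that representative from $\mathbb N$ to $\mathbb R$. Both issues are handled by the same argument: if $F_1,F_2:\mathbb R\to\mathbb R$ are any two functions (arising from the same or $q$-equivalent $\mathbb N$-representatives, via the same or different extensions) with $[F_1|_\mathbb N]=[F_2|_\mathbb N]$ in ${}^*\mathbb R$, then Theorem \ref{secondmainthm} applied to the pair $(F_1,F_2)$ immediately yields $\llbracket\Delta F_1|_\mathbb N\rrbracket=\llbracket\Delta F_2|_\mathbb N\rrbracket$ in ${}^*\mathbb R_{q\times q}$. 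In particular, setting $F_1=F_2$ on $\mathbb N$ but allowing them to differ on $\mathbb R\setminus\mathbb N$ handles the extension ambiguity, and allowing $F_1|_\mathbb N$ and $F_2|_\mathbb N$ to differ on a $q$-null set handles the representative ambiguity.

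There is no serious obstacle here: once Theorem \ref{secondmainthm} has been established, the corollary is pure bookkeeping about lifting an $\mathbb N$-indexed representative to an $\mathbb R$-indexed function so that Definition \ref{finitederivdefn} can be applied, and then restricting back. The proof should be one short paragraph consisting of the extension step, the definition $\Delta[f]:=\llbracket\Delta F|_\mathbb N\rrbracket$, and a single invocation of Theorem \ref{secondmainthm}.
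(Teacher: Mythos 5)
Your proposal is correct and matches the paper's treatment: the paper presents this corollary as an immediate consequence of Theorem \ref{secondmainthm} with no separate proof, and your single invocation of that theorem is exactly the intended argument. The extra bookkeeping you supply about extending an $\mathbb N$-indexed representative to $\mathbb R$ (and checking independence of the extension) is a reasonable clarification of a point the paper leaves implicit, but it does not change the substance; indeed $\Delta F|_{\mathbb N}$ depends only on $F|_{\mathbb N}$, so the extension ambiguity is harmless for exactly the reason your argument shows.
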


One might intuitively expect that $\Delta[f]=0$ should imply that
$f$ is constant (at least ultrafilter often); we present
a counterexample disproving this intuition and replacing it with
a characterization related to periodicity.

\begin{definition}
    Let $f:\mathbb N\to\mathbb R$. We say $f$ is \emph{$q$-a.e.\ constant}
    if $\exists r\in \mathbb R$ such that $\{n\in\mathbb N\,:\,f(n)=r\}\in q$.
    We say $f$ is \emph{$q$-a.e.\ $\Omega$-periodic} if
    $\{n\in\mathbb N\,:\,{}^*f(n+\Omega)=f(n)\}\in q$.
\end{definition}

\begin{theorem}
\label{counterexampleflatthm}
    \begin{enumerate}
        \item For all $[f]\in{}^*\mathbb R$,
            $\Delta[f]=0$ iff $f$ is $q$-a.e.\ $\Omega$-periodic.
        \item For all $[f]\in{}^*\mathbb R$,
            if $f$ is $q$-a.e.\ constant then $\Delta[f]=0$.
        \item There exists $f:\mathbb N\to\mathbb R$ such that
            $\Delta[f]=0$ but $f$ is not $q$-a.e.\ constant.
    \end{enumerate}
\end{theorem}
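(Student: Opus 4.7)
Parts (1) and (2) are essentially bookkeeping. For (1), I would just unfold the definitions: $\Delta[f] = 0$ means $\llbracket \Delta f \rrbracket = 0$, which by Definition \ref{hyperhyperrealsdefn} says $\{n \in \mathbb{N} : \Delta f(n) = 0\} \in q$, and by Definition \ref{finitederivdefn} the event $\Delta f(n) = 0$ is precisely ${}^*f(n + \Omega) = f(n)$, which is the defining condition for $q$-a.e.\ $\Omega$-periodic. For (2), I would mirror the idempotency step from the proof of Theorem \ref{secondmainthm}. Given $r$ with $S := \{n : f(n) = r\} \in q$, idempotency yields that $T := S \cap \{n : S - n \in q\}$ is in $q$; for every $n \in T$ one has $f(n + k) = r = f(n)$ for all $k \in S - n$, so $[k \mapsto f(n+k)]_q = r$ and hence ${}^*f(n + \Omega) = f(n)$. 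Part (1) then gives $\Delta[f] = 0$.

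The substantive content is (3). My proposed witness is $f(n) = v_2(n)$, the $2$-adic valuation. The plan hinges on the auxiliary fact that $2^m \mathbb{N} \in q$ for every $m \in \mathbb{N}$, which I would justify as follows: reduction modulo $2^m$ is a semigroup homomorphism $\mathbb{N} \to \mathbb{Z}/2^m\mathbb{Z}$, which extends to a continuous semigroup homomorphism $\beta\mathbb{N} \to \mathbb{Z}/2^m\mathbb{Z}$ (the codomain already being compact and discrete, hence equal to its own Stone--\v{C}ech compactification). This homomorphism carries the idempotent $q$ to an idempotent in the finite \emph{group} $\mathbb{Z}/2^m\mathbb{Z}$, and the only idempotent of a group is its identity; so the preimage $2^m\mathbb{N}$ of $0$ belongs to $q$. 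This is standard and may be cited from \cite{hindman2011algebra}.

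With this in hand, two easy verifications finish the argument. First, for any $n$, if $v_2(k) > v_2(n)$ then elementary valuation theory forces $v_2(n + k) = v_2(n)$; hence $\{k : v_2(n+k) = v_2(n)\} \supseteq 2^{v_2(n)+1}\mathbb{N} \in q$, so ${}^*f(n+\Omega) = f(n)$ for \emph{every} $n \in \mathbb{N}$. Thus $f$ is (trivially) $q$-a.e.\ $\Omega$-periodic and $\Delta[f] = 0$ by part (1). Second, for any candidate value $m \in \mathbb{N}$, the set $\{n : v_2(n) = m\}$ is disjoint from $2^{m+1}\mathbb{N} \in q$ and therefore not in $q$; so no single value is attained $q$-often, and $f$ is not $q$-a.e.\ constant. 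The main obstacle is identifying an $f$ that is simultaneously $\Omega$-periodic and non-constant mod $q$: the $2$-adic valuation works precisely because $q$ concentrates on higher and higher powers of $2$, which forces $f(n+\Omega) = f(n)$ while simultaneously preventing any slice $\{v_2 = m\}$ from being $q$-large.
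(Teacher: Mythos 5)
Your proof is correct, but part (3) takes a genuinely different route from the paper's. For (1) and (2) (which the paper dismisses as straightforward) your unfolding of definitions and your idempotency argument are exactly right. For (3), the paper's witness is $f=g\circ\RM_\gamma$ with $g(x)=\lfloor 1/|x|\rfloor$, and its proof leans on Lemma \ref{bergelsonlemma} (the recurrence of $\RM_\gamma$ near $0$ along $q$): non-constancy comes from $[f]$ being infinite, and $\Delta f(n)=0$ is verified only on a $q$-large set $S$ via local constancy of $g$ near $\RM_\gamma(n)$ together with the additivity $\RM_\gamma(n+m)=\RM_\gamma(n)+\RM_\gamma(m)$ on $S$. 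Your witness $f=v_2$ instead rests on the standard divisibility property of idempotents ($2^m\mathbb N\in q$ for all $m$, via the induced homomorphism onto the finite group $\mathbb Z/2^m\mathbb Z$; this is indeed in \cite{hindman2011algebra}), and it buys you something cleaner: ${}^*f(n+\Omega)=f(n)$ holds for \emph{every} $n\in\mathbb N$, not merely $q$-almost everywhere, and the non-constancy check is a one-line disjointness observation. The trade-off is that your construction is disconnected from the $\RM_\gamma$ machinery the paper is developing in that section (the paper's example doubles as an illustration of Lemma \ref{bergelsonlemma}), whereas yours is more self-contained and arguably more elementary. One trivial point: since the paper's $\mathbb N$ excludes $0$ and $v_2$ takes the value $0$ on odd numbers, your final disjointness argument should be stated for all integers $m\ge 0$, not just $m\in\mathbb N$; the case $m=0$ is identical ($\{n:v_2(n)=0\}$ is disjoint from $2\mathbb N\in q$), so nothing is lost.
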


\begin{proof}
    (1) and (2) are straightforward. For (3), let
    $\gamma\in\mathbb R^+\backslash \mathbb Q$ and
    define $g:(-\gamma/2,\gamma/2)\to\mathbb R$ as follows.
    For $x\not=0$,
    let $g(x)=\lfloor 1/|x|\rfloor$
    where $\lfloor \bullet\rfloor$ is the greatest integer function.
    Let $g(0)=0$.
    We claim $f=g\circ \RM_\gamma$ witnesses (3).
    By Lemma \ref{bergelsonlemma},
    $\{n\in\mathbb N\,:\,|\RM_\gamma(n)|<\epsilon\}\in q$
    for all real $\epsilon>0$. Since $\lim_{x\to 0}g(x)=\infty$,
    it follows that $[f]$ is infinite, thus $f$ is not
    $q$-a.e.\ constant.

    By Lemma \ref{bergelsonlemma},
    $S=\{n\in\mathbb N\,:\,|\RM_\gamma(n)|<\gamma/4\}\in q$.
    We claim $\Delta f(n)=0$ for all $n\in S$, whence
    $\Delta[f]=\llbracket \Delta f\rrbracket=0$.
    Let $n\in S$.
    Since $\gamma$ is irrational, it follows that $\RM_\gamma(n)$ is not
    one of the jump discontinuity points of $g$, thus
    $\exists \delta>0$ such that $g(x)=g(\RM_\gamma(n))$
    whenever $|x-\RM_\gamma(n)|<\delta$.
    By Lemma \ref{bergelsonlemma},
    $T=\{m\in\mathbb N\,:\,|\RM_\gamma(m)|<\min(\delta,\gamma/4)\}\in q$.
    We claim $f(n+m)-f(n)=0$ for all $m\in T$,
    which establishes $\Delta f(n)=0$.
    Let $m\in T$.
    Since $|\RM_\gamma(n)|<\gamma/4$ and $|\RM_\gamma(m)|<\gamma/4$,
    it follows that $\RM_\gamma(n+m)=\RM_\gamma(n)+\RM_\gamma(m)$.
    Thus $|\RM_\gamma(n+m)-\RM_\gamma(n)|=|\RM_\gamma(m)|<\delta$,
    so $f(n+m)=g(\RM_\gamma(n+m))=g(\RM_\gamma(n))=f(n)$ by
    choice of $\delta$.
\end{proof}

\subsection{An alternate proof and strengthening of Hindman's theorem}

The well-definedness in Corollary \ref{welldefdfinitederivcorollary}
can be used to prove
Hindman's theorem (Theorem \ref{hindmanthm} below).
Formally, our proof of Hindman's theorem is
basically identical to the usual proof, but informally it appears
different because all references to idempotent ultrafilters are hidden
underneath the innocent-looking fact that the derivative
of a constant function is zero. But the idempotency of the underlying
ultrafilter was used to prove that the derivative in question is well-defined.

\begin{lemma}
\label{hindmanlemma}
    Let $c\in\mathbb R$. If $f_1,\ldots,f_k:\mathbb N\to\mathbb R$
    are such that each $[f_i]=c$, then there exist arbitrarily large
    $n\in\mathbb N$ such that the following requirement holds.
    For each $i\in\{1,\ldots,k\}$, ${}^*f_i(n+\Omega)=f_i(n)=c$.
\end{lemma}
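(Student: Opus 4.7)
The plan is to derive the lemma directly from Corollary \ref{welldefdfinitederivcorollary} and Theorem \ref{counterexampleflatthm}(1), using the fact that the finite derivative of a constant hyperreal is zero.

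First, I would observe that for each $i$, since $[f_i]=c=[n\mapsto c]$, the well-definedness of the finite derivative from Corollary \ref{welldefdfinitederivcorollary} yields $\Delta[f_i]=\Delta[n\mapsto c]=\llbracket\Delta(n\mapsto c)\rrbracket$. The function $n\mapsto c$ has finite derivative $\Delta(n\mapsto c)(n)={}^*c(n+\Omega)-c=c-c=0$ for all $n$, so $\Delta[f_i]=0$. Here the hidden appearance of idempotency is exactly the ingredient that turns this trivial observation into useful data about each $f_i$ itself.

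Next, I would invoke Theorem \ref{counterexampleflatthm}(1) to conclude from $\Delta[f_i]=0$ that each $f_i$ is $q$-a.e.\ $\Omega$-periodic: the set $C_i:=\{n\in\mathbb N\,:\,{}^*f_i(n+\Omega)=f_i(n)\}\in q$. Simultaneously, $[f_i]=c$ gives $B_i:=\{n\in\mathbb N\,:\,f_i(n)=c\}\in q$. Intersecting finitely many sets in $q$, I would set
\[
    A \;=\; \bigcap_{i=1}^k (B_i\cap C_i)\in q.
\]
For any $n\in A$ and any $i\in\{1,\ldots,k\}$, we have $f_i(n)=c$ and ${}^*f_i(n+\Omega)=f_i(n)=c$, exactly as required.

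Finally, since $q$ is free by Lemma \ref{idempotentultrafiltersonNexist}, every member of $q$ is infinite, so $A$ is unbounded in $\mathbb N$. This produces arbitrarily large $n\in\mathbb N$ satisfying the conclusion. There is no real obstacle here: the entire lemma is a repackaging of the previously established characterization, with the only point of care being the clean identification $\Delta[f_i]=0$, which is valid precisely because $\Delta$ is well-defined on ${}^*\mathbb R$ (a consequence of idempotency of $q$).
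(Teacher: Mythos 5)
Your proof is correct and follows essentially the same route as the paper: deduce $\Delta[f_i]=0$ from Corollary \ref{welldefdfinitederivcorollary}, extract the $q$-large sets where ${}^*f_i(n+\Omega)=f_i(n)$ and where $f_i(n)=c$, and intersect. The only cosmetic difference is that you route the first step through Theorem \ref{counterexampleflatthm}(1) rather than unpacking $\llbracket\cdot\rrbracket$ directly, and you make explicit the (correct) observation that members of a free ultrafilter are infinite, hence unbounded.
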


\begin{proof}
    By Corollary \ref{welldefdfinitederivcorollary}
    each $\Delta[f_i]=\Delta c=0$, thus
    $\llbracket n\mapsto {}^*f_i(n+\Omega)-f_i(n)\rrbracket=0$,
    i.e.\ $S_i=\{n\in\mathbb N\,:\,{}^*f_i(n+\Omega)=f_i(n)\}\in q$.
    Since each $[f_i]=c$, each $T_i=\{n\in\mathbb N\,:\,f_i(n)=c\}\in q$.
    Thus $\bigcap_i (S_i\cap T_i)\in q$. The elements thereof witness the lemma.
\end{proof}

\begin{theorem}
\label{hindmanthm}
    (Hindman's Theorem)
    If $f:\mathbb N\to\mathbb R$ has finite range,
    then there exists some $c\in \mathbb R$
    and some infinite $S\subseteq \mathbb N$
    such that for all finite nonempty $X\subseteq S$,
    $f(\sum X)=c$.
\end{theorem}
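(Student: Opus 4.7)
The plan is to construct the desired infinite set $S = \{n_1 < n_2 < \cdots\}$ recursively, invoking Lemma \ref{hindmanlemma} at each stage with a collection of shifts of $f$ large enough to encode every partial sum built so far. First, since $f$ has finite range and $q$ is an ultrafilter, exactly one value $c$ in the range of $f$ satisfies $f^{-1}(c) \in q$; equivalently, $[f] = c$ as a hyperreal.

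I will carry out an induction whose invariant at stage $k$ asserts that natural numbers $n_1 < \cdots < n_k$ have been chosen so that: (i) for every nonempty $X \subseteq \{n_1, \ldots, n_k\}$, $f(\sum X) = c$, and (ii) for every subset $X \subseteq \{n_1, \ldots, n_k\}$ (possibly empty, with $\sum \emptyset = 0$), the hyperreal $[n \mapsto f(n + \sum X)]$ equals $c$. The base $k = 0$ is just $[f] = c$. For the step, I apply Lemma \ref{hindmanlemma} to the finite family $\{f_X : X \subseteq \{n_1,\ldots,n_k\}\}$, where $f_X(n) = f(n + \sum X)$; by invariant (ii), each $[f_X] = c$. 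The lemma yields an arbitrarily large $n_{k+1}$ such that for each $X$,
\[
    {}^*f_X(n_{k+1} + \Omega) = f_X(n_{k+1}) = c.
\]
The equality $f_X(n_{k+1}) = c$ says $f(n_{k+1} + \sum X) = c$, which together with invariant (i) at stage $k$ yields invariant (i) at stage $k+1$, since every new nonempty $Y \subseteq \{n_1,\ldots,n_{k+1}\}$ either avoids $n_{k+1}$ (already handled) or has the form $X \cup \{n_{k+1}\}$ with $\sum Y = n_{k+1} + \sum X$. The equality ${}^*f_X(n_{k+1} + \Omega) = c$ unpacks as $[n \mapsto f(n + n_{k+1} + \sum X)] = c$, which is exactly invariant (ii) for the new subset $X \cup \{n_{k+1}\} \subseteq \{n_1,\ldots,n_{k+1}\}$; subsets of $\{n_1,\ldots,n_k\}$ inherit (ii) from the previous stage.

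Taking $S = \{n_1, n_2, \ldots\}$, which is infinite because Lemma \ref{hindmanlemma} produces arbitrarily large witnesses, invariant (i) across all stages delivers the conclusion: every finite nonempty $X \subseteq S$ lies in some $\{n_1,\ldots,n_k\}$ and therefore $f(\sum X) = c$.

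The only real subtlety, and the step I expect to require the most care when writing out, is formulating invariant (ii) correctly so that the same application of Lemma \ref{hindmanlemma} simultaneously (a) witnesses the new instances of (i), and (b) propagates (ii) to all new subsets containing $n_{k+1}$. Once one sees that the two conclusions of Lemma \ref{hindmanlemma} --- the statement about $f_X(n_{k+1})$ and the statement about ${}^*f_X(n_{k+1} + \Omega)$ --- play precisely these two roles, the induction runs smoothly with no further input beyond the well-definedness of $\Delta$ established in Corollary \ref{welldefdfinitederivcorollary}.
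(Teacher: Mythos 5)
Your proposal is correct and follows essentially the same route as the paper's own proof: the same shifted functions $f_X$, the same two-part induction invariant, and the same single application of Lemma \ref{hindmanlemma} at each stage whose two conclusions respectively extend invariants (i) and (ii). No gaps; the only point worth making explicit in a write-up is that $n_{k+1}$ is chosen larger than $\max\{n_1,\ldots,n_k\}$, which your ``arbitrarily large'' remark already covers.
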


\begin{proof}
    By the maximality of
    ultrafilters, $[f]=c$ for some $c\in\mathbb R$.
    For every finite $X\subseteq \mathbb N$,
    define $f_X:\mathbb N\to\mathbb R$ by $f_X(n)=f((\sum X)+n)$
    (note that $f_\emptyset=f$).

    Inductively, suppose we have defined
    $n_1<\cdots<n_k$ (an empty list if $k=0$) such that:
    \begin{enumerate}
        \item
        For each nonempty $X\subseteq\{n_1,\ldots,n_k\}$,
        $f(\sum X)=c$.
        \item
        For each $X\subseteq\{n_1,\ldots,n_k\}$, $[f_X]=c$.
    \end{enumerate}
    By Lemma \ref{hindmanlemma}, pick $n_{k+1}>\max\{n_1,\ldots,n_k\}$
    such that
    \[
    \mbox{($*$) for each $X\subseteq \{n_1,\ldots,n_k\}$,
    ${}^*f_X(n_{k+1}+\Omega)=f_X(n_{k+1})=c$.}
    \]
    For each $X\subseteq\{n_1,\ldots,n_{k+1}\}$
    with $n_{k+1}\in X$, we have (by $*$)
    $f(\sum X)=f_{X\backslash \{n_{k+1}\}}(n_{k+1})=c$
    because $X\backslash\{n_{k+1}\}\subseteq\{n_1,\ldots,n_k\}$.
    And for each $X\subseteq\{n_1,\ldots,n_k\}$,
    since (by $*$) ${}^*f_X(n_{k+1}+\Omega)=c$, it follows that
    ${}^*f_{X\cup\{n_{k+1}\}}(\Omega)=c$, i.e.,
    $[f_{X\cup\{n_{k+1}\}}]=c$. So ${n_1<\cdots<n_{k+1}}$ also
    satisfy 1--2. By induction, we obtain $n_1<n_2<\cdots$
    with the above properties, which clearly proves the theorem.
\end{proof}

In the above proof, we proved more than was required. This leads to
the following strengthening of Hindman's theorem.

\begin{theorem}
    ($\Omega$ as universal Hindman number)
    If $f:\mathbb N\to\mathbb R$ has finite range,
    then there exists some $c\in \mathbb R$
    and some infinite $S\subseteq \mathbb N$
    such that for all finite nonempty $X\subseteq S\cup\{\Omega\}$,
    ${}^*f(\sum X)=c$.
\end{theorem}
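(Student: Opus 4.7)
The plan is to observe that the inductive construction already performed in the proof of Theorem \ref{hindmanthm} delivers the strengthened conclusion verbatim; essentially no new work is needed, only a careful unpacking of invariant (2) from that proof.

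First I would rerun the same induction: by the maximality of ultrafilters pick $c \in \mathbb{R}$ with $[f]=c$, and inductively build $n_1 < n_2 < \cdots$ satisfying the two invariants stated in the proof of Theorem \ref{hindmanthm}, namely (1) $f(\sum X)=c$ for every nonempty $X\subseteq\{n_1,\ldots,n_k\}$, and (2) $[f_X]=c$ for every $X\subseteq\{n_1,\ldots,n_k\}$ (including $X=\emptyset$, where $f_\emptyset=f$). Set $S=\{n_1,n_2,\ldots\}$. Now let $X\subseteq S\cup\{\Omega\}$ be finite and nonempty, and split into two cases.

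If $\Omega\notin X$, then $X$ is a finite nonempty subset of $S$, so $\sum X\in\mathbb{N}$ and ${}^*f(\sum X)=f(\sum X)=c$ by invariant (1). If $\Omega\in X$, write $X=Y\cup\{\Omega\}$ where $Y=X\setminus\{\Omega\}$ is a finite (possibly empty) subset of $S$, chosen inside $\{n_1,\ldots,n_k\}$ for some $k$. Then $\sum X = (\sum Y)+\Omega$. Invariant (2) gives $[f_Y]=c$, i.e., ${}^*f_Y(\Omega)=c$. Since $f_Y(n)=f((\sum Y)+n)$ by definition, the standard behavior of nonstandard extensions under an integer shift yields ${}^*f_Y(\Omega)={}^*f((\sum Y)+\Omega)={}^*f(\sum X)$, which therefore equals $c$. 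This covers both cases and proves the theorem.

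The main point to verify is really only the interaction ${}^*f_Y(\Omega)={}^*f((\sum Y)+\Omega)$, which is the unique place where we step outside of what was literally established in the proof of Theorem \ref{hindmanthm}. Since $\sum Y$ is a standard natural number, this identity is just the standard fact that the nonstandard extension of a shifted function is the shift of the nonstandard extension, so there is no real obstacle; the substantive content of the strengthening was already built into invariant (2). The statement ``$\Omega$ as universal Hindman number'' is thus precisely the observation that invariant (2), maintained throughout the induction, is in fact the $\Omega$-inclusive half of the conclusion.
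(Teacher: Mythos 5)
Your proposal is correct and follows essentially the same route as the paper's own proof: both reuse the sequence $n_1<n_2<\cdots$ and invariant (2) from the proof of Theorem \ref{hindmanthm}, and both reduce the $\Omega$-containing case to the identity ${}^*f_Y(\Omega)={}^*f((\sum Y)+\Omega)$. Your explicit case split and verification of the shift identity is just a slightly more detailed spelling-out of what the paper states in one line.
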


\begin{proof}
    In the proof of Theorem \ref{hindmanthm}, we proved the existence
    of $c\in\mathbb R$ and $n_1,n_2,\ldots\in \mathbb N$ such that
    (1) for each finite nonempty
    $X\subseteq \{n_1,n_2,\ldots\}$, $f(\sum X)=c$, and (2) for each finite
    $X\subseteq \{n_1,n_2,\ldots\}$, $[f_X]=c$, where
    $f_X:\mathbb N\to\mathbb R$ is defined by $f_X(n)=f((\sum X)+n)$.
    For $X\subseteq\mathbb N$, the statement $[f_X]=c$ is equivalent
    to ${}^*f_X(\Omega)=c$,
    which is equivalent to ${}^*f(\sum(X\cup\{\Omega\}))=c$.
\end{proof}

In a heuristical sense, our proof of Theorem \ref{hindmanthm}
seems to suggest that the idempotency requirement might be indispensable for
Corollary \ref{welldefdfinitederivcorollary}: if the corollary
could be proven using weaker assumptions about $q$, then we would have a
non-idempotent ultrafilter proof of Hindman's theorem,
which seems like it would be surprising. But of course,
this is not a rigorous proof, and we do not actually know whether
there exists any non-idempotent ultrafilter for which
Corollary \ref{welldefdfinitederivcorollary} holds.

\subsection{Differentiating by $[\RM_\gamma]$}

In Theorem \ref{trickygammatheorem} we established a deep connection
between our finite derivative and the usual derivative from elementary
calculus. But the theorem was limited to $\gamma$-periodic functions for
some irrational $\gamma$. At first glance, this seems very limiting.
But since $\llbracket\Delta f\rrbracket$ only depends on $f|_\mathbb N$,
the following lemma shows that the $\gamma$-periodic hypothesis
in Theorem \ref{trickygammatheorem} does not limit the $\bullet'$-like
nature of the derivative from Corollary \ref{welldefdfinitederivcorollary}
at all.

\begin{lemma}
    Let $\gamma\in \mathbb R^+\backslash \mathbb Q$.
    For every $f:\mathbb R\to\mathbb R$, 
    there exists a $\gamma$-periodic function $\hat f:\mathbb R\to\mathbb R$
    such that $f|_\mathbb N=\hat f|_\mathbb N$.
\end{lemma}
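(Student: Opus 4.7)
The plan is to exploit the irrationality of $\gamma$ to show that the map $n\mapsto \RM_\gamma(n)$ is injective on $\mathbb N$, then build $\hat f$ by first specifying it on a fundamental domain $(-\gamma/2,\gamma/2]$ for the action of $\gamma\mathbb Z$ on $\mathbb R$ and then extending by $\gamma$-periodicity.

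First I would verify injectivity. Suppose $n_1,n_2\in\mathbb N$ satisfy $\RM_\gamma(n_1)=\RM_\gamma(n_2)$. By Definition \ref{remainderdefn}, there are integers $k_1,k_2$ with $n_i = k_i\gamma + \RM_\gamma(n_i)$, so $n_1-n_2 = (k_1-k_2)\gamma$. The left side is an integer, and since $\gamma$ is irrational, the right side can only be an integer if $k_1=k_2$, forcing $n_1=n_2$. So the map $n\mapsto\RM_\gamma(n)$ is injective.

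Next I would define $\hat f$ on the fundamental domain $I=(-\gamma/2,\gamma/2]$ by setting
\[
\hat f(r)=\begin{cases} f(n) & \text{if } r=\RM_\gamma(n) \text{ for some (necessarily unique) } n\in\mathbb N,\\ 0 & \text{otherwise,}\end{cases}
\]
and then extend to all of $\mathbb R$ by $\hat f(r+k\gamma)=\hat f(r)$ for every integer $k$ and every $r\in I$. Every real $x$ has a unique representation $x=r+k\gamma$ with $r\in I$ and $k\in\mathbb Z$, so $\hat f$ is well-defined on $\mathbb R$, and it is $\gamma$-periodic by construction.

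Finally I would check $\hat f|_\mathbb N=f|_\mathbb N$. For each $n\in\mathbb N$, Definition \ref{remainderdefn} gives $n=k\gamma+\RM_\gamma(n)$ with $\RM_\gamma(n)\in(-\gamma/2,\gamma/2)\subseteq I$, so $\hat f(n)=\hat f(\RM_\gamma(n))=f(n)$ by the defining clause. The only real obstacle to this construction is the well-definedness of $\hat f$ on $I$, but that is exactly what the injectivity step above guarantees; the rest is routine.
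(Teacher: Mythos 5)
Your proposal is correct and is essentially the same construction as the paper's: the paper defines $\hat f(x)=f(n)$ when $x=m\gamma+n$ with $m\in\mathbb Z$, $n\in\mathbb N$ (and $0$ otherwise), with well-definedness following from the same irrationality argument you use to prove injectivity of $n\mapsto\RM_\gamma(n)$. Your phrasing via a fundamental domain and periodic extension is just a slightly more explicit packaging of the identical idea.
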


\begin{proof}
    Define $\hat f:\mathbb R\to\mathbb R$ by
    \[
        \hat f(x) =
        \begin{cases}
            f(n) & \mbox{if $x=m\gamma+n$ for some $m\in\mathbb Z, n\in\mathbb N$,}\\
            0 &\mbox{in any other case.}
        \end{cases}
    \]
    Since $\gamma$ is irrational, there do not exist distinct ways to write
    $x=m\gamma+n$, thus $\hat f(x)$ is well-defined.
    Clearly $\hat f$ has the desired properties.
\end{proof}

Thus if $f:\mathbb R\to\mathbb R$
then
$\Delta[f|_\mathbb N]
=\Delta[\hat f|_\mathbb N]
=\llbracket n\mapsto \Delta \hat f(n)\rrbracket$
encodes (by Theorem \ref{trickygammatheorem})
information not about the derivative of $f$ but rather about the
derivative of $\hat f$. Since $[f|_\mathbb N]=[\hat f|_\mathbb N]$
this means that $\Delta[f|_\mathbb N]$
does indeed encode information about the hyperreal
$[f|_\mathbb N]$, just not necessarily about $f$.
For example, if $f(x)=x$ for all $x\in \mathbb R$,
then $\hat f$ is exotic and $\Delta[f|_\mathbb N]/[\RM_\gamma]$ is far from
the derivative $x'=1$ we might expect.
Nonetheless, we can use Theorem \ref{trickygammatheorem} to obtain
some other derivatives for which the familiar rules of elementary
calculus apply.

\begin{definition}
\label{Dgammadefn}
    For every $\gamma\in\mathbb R^+\backslash \mathbb Q$,
    for every $f:\mathbb N\to\mathbb R$,
    define $D_\gamma f:{\subseteq}\mathbb N\to\mathbb R$ by
    $D_\gamma f(n) = \mathrm{st}(\Delta f(n)/[\RM_\gamma])$
    for all $n$ such that $\mathrm{st}(\Delta f(n)/[\RM_\gamma])$
    is defined.
\end{definition}

\begin{definition}
\label{factorthroughstdefn}
    For every $\gamma\in\mathbb R^+\backslash\mathbb Q$,
    define $D_\gamma:{\subseteq}{}^*\mathbb R\to{}^*\mathbb R$
    as follows.
    For every $[f]\in{}^*\mathbb R$
    (so $f:\mathbb N\to\mathbb R$),
    define $D_\gamma[f]=[n\in\mathbb N\mapsto D_\gamma f(n)]$
    provided $[n\in\mathbb N\mapsto D_\gamma f(n)]$ is defined.
\end{definition}

\begin{proposition}
    Let $\gamma\in\mathbb R^+\backslash \mathbb Q$.
    The operator $D_\gamma:{\subseteq}{}^*\mathbb R\to{}^*\mathbb R$
    of Definition \ref{factorthroughstdefn} is well-defined.
\end{proposition}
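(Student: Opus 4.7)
The plan is to reduce this directly to Corollary \ref{welldefdfinitederivcorollary}. Suppose $[f]=[g]$ in ${}^*\mathbb{R}$ and that $D_\gamma[f]$ is defined; I need to show $D_\gamma[g]$ is also defined and that $D_\gamma[f]=D_\gamma[g]$.

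First I invoke Corollary \ref{welldefdfinitederivcorollary}, which gives $\llbracket\Delta f\rrbracket=\llbracket\Delta g\rrbracket$. Unpacking this via Definition \ref{hyperhyperrealsdefn}, the set $T:=\{n\in\mathbb{N}\,:\,\Delta f(n)=\Delta g(n)\}$ lies in $q$. For every $n\in T$, the hyperreals $\Delta f(n)/[\RM_\gamma]$ and $\Delta g(n)/[\RM_\gamma]$ are literally the same element of ${}^*\mathbb{R}_{q\times q}$, so $\mathrm{st}$ of one is defined exactly when $\mathrm{st}$ of the other is, and they agree whenever defined.

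Next I track the partiality carefully. Writing $A_f=\mathrm{dom}(D_\gamma f)$ and $A_g=\mathrm{dom}(D_\gamma g)$, the assumption ``$D_\gamma[f]$ is defined'' means precisely $A_f\in q$, by Convention \ref{undefdconvention}. Because $A_f\cap T=A_g\cap T$ and $T\in q$, it follows that $A_g\in q$, so $D_\gamma[g]$ is defined. Moreover $D_\gamma f(n)=D_\gamma g(n)$ on the $q$-large set $A_f\cap A_g\cap T$, giving $[n\mapsto D_\gamma f(n)]=[n\mapsto D_\gamma g(n)]$, i.e., $D_\gamma[f]=D_\gamma[g]$.

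There is no real obstacle here; the only subtle point is keeping the partiality bookkeeping straight, i.e., distinguishing ``the domain is $q$-large'' from ``the values agree on a $q$-large set''. Both follow uniformly because Corollary \ref{welldefdfinitederivcorollary} yields the \emph{pointwise} identity $\Delta f(n)=\Delta g(n)$ on $T\in q$ rather than a weaker equivalence-class agreement, and this pointwise identity is preserved by division by the fixed hyperreal $[\RM_\gamma]$ and by taking standard parts.
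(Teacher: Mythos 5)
Your proposal is correct and follows essentially the same route as the paper: both obtain $\llbracket\Delta f\rrbracket=\llbracket\Delta g\rrbracket$ (you via Corollary \ref{welldefdfinitederivcorollary}, the paper via the underlying Theorem \ref{secondmainthm}), extract a $q$-large set on which $\Delta f(n)=\Delta g(n)$ pointwise, and note that dividing by $[\RM_\gamma]$ and taking standard parts preserves both definedness and value there. Your explicit bookkeeping of the domains $A_f$, $A_g$ is a slightly more careful rendering of the paper's parenthetical ``either both sides are defined, or both sides are undefined,'' not a different argument.
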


\begin{proof}
    Suppose $f,g:\mathbb N\to\mathbb R$ are such that
    $[f]=[g]$.
    By Theorem \ref{secondmainthm},
    $\llbracket \Delta f\rrbracket = \llbracket \Delta g\rrbracket$.
    Thus, $\exists S\in q$ such that
    $\Delta f(n)=\Delta g(n)$ for all $n\in S$.
    Thus for all $n\in S$, $\Delta f(n)/[\RM_\gamma]=\Delta g(n)/[\RM_\gamma]$,
    and $\mathrm{st}(\Delta f(n)/[\RM_\gamma])$ is defined
    iff $\mathrm{st}(\Delta g(n)/[\RM_\gamma])$ is defined.
    Thus $D_\gamma [f]=D_\gamma[g]$ (and either both sides are
    defined, or both sides are undefined).
\end{proof}

\begin{theorem}
\label{dimensionstheorem}
    Let $\gamma\in\mathbb R^+\backslash\mathbb Q$.
    If $f:\mathbb R\to\mathbb R$ is differentiable on
    $(-\gamma/2,\gamma/2)$,
    then
    $D_\gamma [f\circ \RM_\gamma] = [f'\circ \RM_\gamma]$.
\end{theorem}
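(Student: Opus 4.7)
The plan is to reduce the statement to Theorem \ref{trickygammatheorem}(1), which requires a $\gamma$-periodic function. First I would build a $\gamma$-periodic extension $\tilde{f}:\mathbb{R}\to\mathbb{R}$ of $f|_{(-\gamma/2,\gamma/2)}$, for instance by setting $\tilde{f}(x)=f(x-k\gamma)$ for the unique integer $k$ with $x-k\gamma\in(-\gamma/2,\gamma/2)$ (and $\tilde{f}(x)=0$ on the remaining exceptional set). For every $n\in\mathbb{N}$, writing $n=k\gamma+\RM_\gamma(n)$ with $k\in\mathbb{Z}$, we get $\tilde{f}(n)=f(\RM_\gamma(n))=(f\circ\RM_\gamma)(n)$, so $\tilde{f}|_\mathbb{N}$ and $f\circ\RM_\gamma$ are literally the same function $\mathbb{N}\to\mathbb{R}$. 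In particular $\Delta(f\circ\RM_\gamma)(n)=\Delta(\tilde{f}|_\mathbb{N})(n)$ for every $n$, so $D_\gamma(f\circ\RM_\gamma)$ and $D_\gamma(\tilde{f}|_\mathbb{N})$ agree on their common domain.

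The main technical step is showing that for every $n\in\mathbb{N}$, $\tilde{f}$ is differentiable at $n$ with $\tilde{f}'(n)=f'(\RM_\gamma(n))$. Since $\gamma$ is irrational and $n\geq 1$, the equality $|n-k\gamma|=\gamma/2$ would force $n=(k\pm 1/2)\gamma$, which is impossible; hence $\RM_\gamma(n)$ lies \emph{strictly} inside $(-\gamma/2,\gamma/2)$, and there is some $\eta>0$ with $(\RM_\gamma(n)-\eta,\RM_\gamma(n)+\eta)\subseteq(-\gamma/2,\gamma/2)$. For $|h|<\eta$, $\gamma$-periodicity gives $\tilde{f}(n+h)=\tilde{f}(\RM_\gamma(n)+h)=f(\RM_\gamma(n)+h)$, and since $f$ is differentiable on $(-\gamma/2,\gamma/2)$ by hypothesis, the difference quotient for $\tilde{f}$ at $n$ reduces to the difference quotient for $f$ at $\RM_\gamma(n)$, yielding $\tilde{f}'(n)=f'(\RM_\gamma(n))$.

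Finally, applying Theorem \ref{trickygammatheorem}(1) to the $\gamma$-periodic $\tilde{f}$ at each $n\in\mathbb{N}$ gives $\mathrm{st}(\Delta\tilde{f}(n)/[\RM_\gamma])=\tilde{f}'(n)=f'(\RM_\gamma(n))$, hence $D_\gamma(f\circ\RM_\gamma)(n)=f'(\RM_\gamma(n))=(f'\circ\RM_\gamma)(n)$ for every $n\in\mathbb{N}$. Unpacking Definition \ref{factorthroughstdefn} then gives $D_\gamma[f\circ\RM_\gamma]=[n\mapsto f'(\RM_\gamma(n))]=[f'\circ\RM_\gamma]$. I expect the only real obstacle is the bookkeeping around the periodic extension: one must use the strict inclusion $\RM_\gamma(n)\in(-\gamma/2,\gamma/2)$ to ensure enough room on both sides of $n$ to transfer $f$'s differentiability to $\tilde{f}$, after which Theorem \ref{trickygammatheorem}(1) does all of the real analytic work.
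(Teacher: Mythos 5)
Your proposal is correct and follows essentially the same route as the paper: the paper's proof also builds the $\gamma$-periodic extension (packaged as $f\circ\overline{\RM}_\gamma$, where $\overline{\RM}_\gamma$ extends $\RM_\gamma$ to $\mathbb R$ minus the half-integer multiples of $\gamma$), observes it agrees with $f\circ\RM_\gamma$ on $\mathbb N$, differentiates it via the chain rule with $\overline{\RM}_\gamma'=1$ where you instead transfer the difference quotient directly, and then invokes Theorem \ref{trickygammatheorem}(1) exactly as you do.
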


\begin{proof}
    Let $S=\mathbb R\backslash\{(k+\frac12)\gamma\,:\,k\in\mathbb Z\}$.
    Define $\overline{\RM}_\gamma$ the same way
    $\RM_\gamma$ was defined (Definition \ref{remainderdefn})
    except define it for all $x\in S$ instead of only $x\in\mathbb N$.
    Clearly $\overline{\RM}_\gamma|_\mathbb N=\RM_\gamma|_\mathbb N$ and
    $\overline{\RM}_\gamma$ is differentiable with $\overline{\RM}_\gamma'(x)=1$ on
    $S$.
    Since $f$ is differentiable on
    $(-\gamma/2,\gamma/2)= \overline{\RM}_\gamma(S)$,
    we can apply the chain rule and see
    $f(\overline{\RM}_\gamma(x))'
    =f'(\overline{\RM}_\gamma(x))\overline{\RM}_\gamma'(x)
    =f'(\overline{\RM}_\gamma(x))$
    on $S$ ($*$).
    For every $n\in\mathbb N$,
    \begin{align*}
        D_\gamma (f\circ\RM_\gamma)(n)
            &= \mathrm{st}(\Delta (f\circ\RM_\gamma)(n)/[\RM_\gamma])
                &\mbox{(Definition \ref{Dgammadefn})}\\
            &= \mathrm{st}(\Delta (f\circ\overline{\RM}_\gamma)(n)/[\RM_\gamma])
                &\mbox{($\RM_\gamma|_\mathbb N=\overline{\RM}_\gamma|_\mathbb N$)}\\
            &= (f\circ\overline{\RM}_\gamma)'(n)
                &\mbox{(Theorem \ref{trickygammatheorem})}\\
            &= f'(\overline{\RM}_\gamma(n))
                &\mbox{(By $*$)}\\
            &= f'(\RM_\gamma(n)),
                &\mbox{($\RM_\gamma|_\mathbb N=\overline{\RM}_\gamma|_\mathbb N$)}
    \end{align*}
    so $[n\mapsto D_\gamma (f\circ\RM_\gamma)(n)]=[n\mapsto f'(\RM_\gamma(n))]$,
    i.e., $D_\gamma [f\circ \RM_\gamma]=[f'\circ \RM_\gamma]$.
\end{proof}

For example,
\[
    D_\gamma(e^{[\RM_\gamma]} + [\RM_\gamma]^3 + \cos2[\RM_\gamma])
    =
    e^{[\RM_\gamma]} + 3[\RM_\gamma]^2 - 2\sin2[\RM_\gamma].
\]
Thus, $D_\gamma$
follows the familiar derivative rules from
elementary calculus as long as we consider functions not of the
continuous variable $x\in\mathbb R$, but rather of the
discrete variable $\RM_\gamma(n)\in\RM_\gamma(\mathbb N)$.
In a sense, one can ``differentiate by $[\RM_\gamma]$'';
it might even be tempting to write $D_\gamma$ as $d/d[\RM_\gamma]$.
This is spiritually similar to how the prime numbers play the role of
dimensions, and how one differentiates by prime numbers,
in Jeffries' paper \cite{jeffries1772differentiating} in the Notices.

\section*{Acknowledgments}

We gratefully acknowledge Arthur Paul Pedersen
and the reviewers and the editor for comments and feedback.

\bibliographystyle{jloganal}
\bibliography{main}

\end{document}